\newtheorem{theorem}{Theorem}[section]
\newtheorem{lemma}[theorem]{Lemma}
\newtheorem{corollary}[theorem]{Corollary}
\tikzset{square matrix/.style={
	matrix of nodes,
	column sep=-\pgflinewidth, row sep=-\pgflinewidth,
	nodes={draw,
	minimum height=15pt,
	anchor=center,
					text width=13pt,
					align=center,
					inner sep=0pt
				},
				},
				square matrix/.default=2cm
}
\tikzset{square freq/.style={
			matrix of nodes,
			column sep=-\pgflinewidth, row sep=-\pgflinewidth,
			nodes={draw,
					minimum height=25pt,
					anchor=center,
					text width=25pt,
					align=center,
					inner sep=0pt
				},
		},
	square freq/.default=2cm
}
\tikzset{square freq small/.style={
			matrix of nodes,
			column sep=-\pgflinewidth, row sep=-\pgflinewidth,
			nodes={draw,
					minimum height=20pt,
					anchor=center,
					text width=20pt,
					align=center,
					inner sep=0pt
				},
		},
	square freq small/.default=2cm
}
\definecolor{tblue}{HTML}{88CCEE}
\definecolor{tred}{HTML}{CC6677}
\definecolor{tgreen}{HTML}{44AA99}
\definecolor{tyellow}{HTML}{DDCC77}
\definecolor{topurple}{HTML}{9B83C7}
\colorlet{t0}{white!0}
\colorlet{t1}{tblue}
\colorlet{t2}{tred}
\colorlet{t3}{tgreen}
\colorlet{t4}{tyellow}
\colorlet{t5}{topurple}
\renewcommand{\leq}{\leqslant}
\renewcommand{\geq}{\geqslant}
\renewcommand{\le}{\leqslant}
\renewcommand{\ge}{\geqslant}
\renewcommand\emptyset{\varnothing}
\newcommand\eref[1]{{\ensuremath(\ref{#1})}}
\newcommand\sref[1]{\S$\ref{#1}$}
\newcommand\fref[1]{Figure~$\ref{#1}$}
\newcommand\Tref[1]{Table~$\ref{#1}$}
\newcommand\lref[1]{Lemma~$\ref{#1}$}
\newcommand{\BB}{\mathcal{B}}
\newcommand{\LL}{\mathcal{L}}
\newcommand{\OO}{\mathcal{O}}
\newcommand{\PP}{\mathcal{P}}
\newcommand{\RR}{\mathcal{R}}
\newcommand{\MS}{\mathcal{S}}
\g@addto@macro\bfseries{\boldmath}
\newcommand{\mols}[2]{\ensuremath{#1\text{-}\mathrm{MOLS}(#2)}}
\newcommand{\molsb}[2]{\mols{(#1)}{#2}}
\newcommand{\mofs}[2]{\ensuremath{#1\text{-}\mathrm{MOFS}(#2)}}
\newcommand{\net}[2]{\ensuremath{#1\text{-}\mathrm{net}(#2)}}
\newcommand{\netb}[2]{\net{(#1)}{#2}}
\newcommand{\FField}[1]{\ensuremath{\mathbb{F}_{#1}}}
\newcommand{\bvec}[1]{\ensuremath{\mathbf{#1}}}
\newcommand{\aut}[1]{\ensuremath{\text{Aut}(#1)}}
\newcommand{\zto}[1]{\ensuremath{\{0,\dots,#1\}}}
\newcommand{\ztosublist}[2]{\ensuremath{(#1_0,\dots,#1_{#2})}}
\newcommand{\otosublist}[2]{\ensuremath{(#1_0,\dots,#1_{#2-1})}}
\newcommand{\rel}{\RR}
\newcommand{\relodd}{\RR_\OO}
\newcommand{\moa}{\text{MOA}}
\title{Pairs of MOLS of order ten satisfying non-trivial relations}
\author{Michael J. Gill\thanks{Research supported by an Australian Government
    Research Training Program (RTP) Scholarship} \ \
  and \ Ian M. Wanless
  \\
\small School of Mathematics\\[-0.75ex]
\small Monash University\\[-0.75ex]
\small Vic 3800, Australia\\
\small\tt \{michael.gill,ian.wanless\} \ @monash.edu
}
\begin{document}

\date{}

\maketitle

\begin{abstract}
A relation on a $\net{k}{n}$ (or, equivalently, a set of $k-2$ mutually
orthogonal Latin squares of order $n$) is an $\FField{2}$ linear dependence
within the incidence matrix of the net. Dukes and Howard (2014) showed that any
\net{6}{10} satisfies at least two non-trivial relations, and classified the
relations that could appear in such a net. We find that, up to equivalence,
there are $18\,526\,320$ pairs of MOLS satisfying at least one non-trivial
relation. None of these pairs extend to a triple. We also rule out one other
relation on a set of $3$-MOLS from Dukes and Howard's classification.
\end{abstract}

\newdimen\digitwidth \setbox0=\hbox{\rm0} \digitwidth=\wd0
\catcode`@=\active \def@{\kern\digitwidth}

\section{Introduction}

A \emph{partial linear space} $(P, L)$ consists of a set $P$ of \emph{points}
and a set $L$ of subsets of $P$ called \emph{lines}, such that distinct lines
share at most one point. A line $\ell\in L$ and a point $p\in P$ are
\emph{incident} if $p\in \ell$. Two lines $\ell_1$ and $\ell_2$ are
\emph{parallel} if $\ell_1=\ell_2$ or $\ell_1 \cap\ell_2 = \emptyset$.
Similarly, two lines $\ell_1$ and $\ell_2$ are \emph{orthogonal} if
$|\ell_1\cap\ell_2| = 1$. A \net{k}{n} is a partial linear space $N = (P, L)$
consisting of a set $P$ of $n^2$ points and a set $L$ of $kn$ lines, such that
each line is incident with $n$ points and each point is incident with $k$
lines. The lines of $N$ should partition into $k$ \emph{parallel classes} with
each parallel class containing $n$ pairwise parallel lines. Every pair of lines
that are not in the same parallel class must be orthogonal. Two $k$-nets, $N_1
= (P_1, L_1)$ and $N_2 = (P_2,L_2)$, of order $n$ are isomorphic if there
exists a bijection $\phi: P_1 \rightarrow P_2$ such that $\ell \in L_1$ if and
only if $\phi(\ell) \in L_2$. A \net{k'}{n} $N_1 = (P,L_1)$ is a \emph{subnet}
of a \net{k}{n} $N_2 = (P,L_2)$ if $L_1 \subseteq L_2$.

Let $N = (P,L)$ be a \net{k}{n}. The \emph{incidence matrix} $M(N)$ is an
$n^2\times nk$ array over the finite field $\FField{2}$ with rows and columns
indexed by the points and lines of $N$, respectively. The entry $M_{i,j} = 1$
if the point $i$ is contained in line $j$ and $M_{i,j} = 0$ otherwise. For a
line $\ell$ in the net, we define $\alpha(\ell)$ to be the column vector in
$M(N)$ corresponding to the line $\ell$. A \emph{relation} $\rel \subseteq L$
is a set of lines such that $\sum_{\ell\in\rel} \alpha(\ell) = \bvec{0}$, the
all zero $\FField{2}$-vector.  In other words, a relation is an
$\FField{2}$-linear dependence among the columns of $M(N)$.  It is immediate
from this definition that the symmetric difference of two relations is a
relation. Every parallel class $\Pi$ has $\sum_{\ell\in \Pi}\alpha(\ell) =
\bvec{1}$, the all one $\FField{2}$-vector, so the union of any pair of
distinct parallel classes is a relation. Any relation that is the union of any
number of parallel classes is considered \emph{trivial}. 

Each relation on $N$ corresponds to a null vector of $M(N)$. We say that a set
of relations is \emph{linearly independent} if the corresponding set of null
vectors is linearly independent. We define the \emph{dimension} of $N$, written
$\dim(N)$, to be the rank of $M(N)$.  Throughout, we will use
$\Pi_0,\Pi_1,\dots,\Pi_{k-1}$ to denote the parallel classes of a $k$-net $N$.
The set $\{\Pi_0\cup\Pi_1,\Pi_0\cup\Pi_2,\dots,\Pi_0\cup\Pi_{k-1}\}$ is a set
of $k-1$ linearly independent relations on $N$ which span the trivial
relations. It follows that
\begin{equation}\label{e:dimNbound}
	\dim(N) \le nk-k+1,
\end{equation}
with equality if and only if there are no non-trivial relations on $N$.

Let $N = (P, L)$ be a \net{k}{n} and let $\rel$ be a relation on $N$. We say
that a line $\ell \in L$ is \emph{relational} if $\ell\in\rel$. The weight
$w(p)$ of a point $p$ is the number of relational lines incident to $p$ in $N$.
The fact that $\rel$ is a relation is equivalent to the statement that
$w(p)\equiv0\pmod{2}$ for all points $p\in P$.  The number of relational lines
in a parallel class $\Pi$ is the \emph{weight} of $\Pi$. The \emph{type} of a
relation indicates the weight of each parallel class in $N$. For simplicity, we
will often use power notation, with $a^b$ indicating that $b$ parallel classes
have weight $a$. For example, a relation on a \net{6}{n} with $3$ parallel
classes containing $2$ relational lines, $2$ parallel classes containing $4$
relational lines and $1$ parallel class containing $6$ relational lines has
type $(2,2,2,4,4,6)$ or $2^34^26$.

\citet*{dukes_group_2014}, building on earlier work by
\citet*{stinson_short_1984} and \citet*{dougherty_coding_1994-1}, proved the
following necessary condition on the set of relations on a \net{6}{10}, if such
a net exists.

\begin{theorem}\label{t:DH}
  Any \net{6}{10} satisfies at least two linearly independent non-trivial
  relations.
\end{theorem}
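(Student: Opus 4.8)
The plan is to recast the statement as the rank bound $\dim(N)\le 53$ and then derive it from a self-orthogonality property of the code spanned by the lines. Write $M=M(N)$, an $n^2\times nk=100\times 60$ matrix over $\FField2$, and set $r=\dim(N)=\operatorname{rank}M$. Relations are exactly the vectors $x\in\FField2^{60}$ with $Mx=\bvec0$, so they form a space of dimension $60-r$, while the trivial relations span a subspace of dimension $k-1=5$. Hence the relation space modulo the trivial relations has dimension $(60-r)-5=55-r$, and producing two linearly independent non-trivial relations is equivalent to showing $55-r\ge 2$, i.e. $r\le 53$. Note that \eqref{e:dimNbound} by itself only gives the weaker bound $r\le 55$.

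The key object is the binary code $C\subseteq\FField2^{100}$ generated by the columns $\alpha(\ell)$, so $\dim C=r$, together with its $\FField2$-Gram matrix $M^{\mathsf T}M$, whose $(\ell,\ell')$ entry is $\alpha(\ell)\cdot\alpha(\ell')=|\ell\cap\ell'|\bmod 2$. By the net axioms this is $0$ when $\ell$ and $\ell'$ are parallel (for $\ell=\ell'$ the entry is the weight $n=10\equiv0$, and distinct parallel lines are disjoint) and $1$ when they lie in different parallel classes. Writing $J$ for an all-ones matrix, this gives
\[
  M^{\mathsf T}M=(J_6+I_6)\otimes J_{10}\qquad\text{over }\FField2 .
\]
Since $J_{10}$ has rank $1$ and $\det(J_6+I_6)=7$ is odd (so $J_6+I_6$ is invertible over $\FField2$), this matrix has rank $6$.

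I would then invoke the standard fact that the hull $C\cap C^{\perp}$ has dimension $\dim C-\operatorname{rank}(M^{\mathsf T}M)=r-6$; concretely this comes from checking that $y\mapsto My$ maps $\ker(M^{\mathsf T}M)$ onto $C\cap C^{\perp}$ with kernel $\ker M$, so $\dim(C\cap C^{\perp})=(60-6)-(60-r)=r-6$. Since the hull is contained in $C^{\perp}$ and $\dim C^{\perp}=100-r$, we obtain
\[
  r-6=\dim(C\cap C^{\perp})\le\dim C^{\perp}=100-r ,
\]
whence $2r\le106$ and $r\le 53$, which is exactly what is needed.

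I expect the Gram-matrix step to be the crux: the whole argument rests on the columns being self-orthogonal in a structured way, and this uses both that $n$ is even (so the diagonal vanishes) and that $k+1=7$ is odd (so $J_6+I_6$ is nonsingular and the rank is exactly $6$, not smaller). The points I would be most careful about are that $\dim(C\cap C^{\perp})=r-6$ holds for the full, linearly dependent spanning set of $60$ columns rather than merely for a basis of $C$, and that after removing the $5$-dimensional space of trivial relations what remains genuinely has dimension $55-r\ge 2$, so that the two relations produced are both non-trivial and linearly independent.
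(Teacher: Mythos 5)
You should first know that the paper itself contains no proof of \tref{t:DH}: it is imported verbatim from \citet{dukes_group_2014}, who in turn build on the coding-theoretic method of \citet{stinson_short_1984} and \citet{dougherty_coding_1994-1}. So the only meaningful comparison is with that cited literature, and your argument is essentially the standard coding-theoretic proof used there, carried out correctly. Your reduction of the statement to $r=\dim(N)\le 53$ is the right reading: the relation space is $\ker M$ of dimension $60-r$, the trivial relations form a $5$-dimensional subspace, and the theorem in its intended (and useful) sense asserts that the quotient has dimension $55-r\ge 2$. The Gram-matrix identity $M^{\mathsf T}M=(J_6+I_6)\otimes J_{10}$ over $\FField{2}$ is exactly where the net axioms and the evenness of $n=10$ enter, and its rank is indeed $6$. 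Your rank-nullity verification that $y\mapsto My$ carries $\ker(M^{\mathsf T}M)$ onto $C\cap C^{\perp}$ with kernel $\ker M$ is valid and correctly disposes of the worry about the $60$ columns being a dependent spanning set; then $r-6=\dim(C\cap C^{\perp})\le\dim C^{\perp}=100-r$ gives $r\le53$, hence two relations that are independent even modulo the trivial ones. Two minor remarks. First, your emphasis on $J_6+I_6$ being nonsingular is misplaced: the inequality reads $r\le 50+\tfrac12\operatorname{rank}(M^{\mathsf T}M)$, so only an \emph{upper} bound on the Gram rank is needed, and a smaller rank would only strengthen the conclusion; what the parity of $k+1=7$ buys is that the bound is no better than $53$, not that it is at least that good. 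Second, under the paper's literal definition of independence (as vectors), $r\le 54$ would already suffice, since a non-trivial $\RR$ and $\RR$ plus a trivial relation are distinct non-zero vectors; the quotient formulation you prove is the stronger statement of Dukes and Howard, and is the one the rest of the paper actually relies on.
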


Additionally, \citet{dukes_group_2014} showed that any non-trivial relation on
a \net{6}{10} has one the following types (up to taking disjoint union with a
trivial relation): $4^4$, $2^346$, $2^24^3$, $24^36$, $4^5$, $2^6$, $2^44^2$,
$2^34^26$, $2^24^4$, $24^46$, $4^6$. In this paper we report on two
computations on nets of order 10.  The first found all 4-nets that satisfy a
non-trivial relation. None of them extends to a 5-net. The second computation
showed that no 5-net satisfies a relation of type $2^346$. We are therefore
able to rule out a 6-net satisfying either of the first two types of relations
in Dukes and Howard's catalogue. 

The structure of this paper is as follows.  In \Cref{s:background} we give the
necessary background and definitions, including the interpretation of our
problem in terms of mutually orthogonal Latin squares.  In \Cref{s:templates},
we prove a general result on the structure of relations in a \net{6}{n} for
even $n$.  We also give some structural lemmas for nets with relations of type
$4^4$ and $2^346$. In \Cref{s:refining}, we describe algorithms that generate a
catalogue of nets that satisfy a given relation.
\Cref{section:nets4444,s:44222} report on the computational results we obtained
using those algorithms for relations of type $4^4$ and $2^346$, respectively.
For the purposes of crosschecking, all computational results reported in this
paper were obtained independently by the two authors.

\section{Background and Definitions}\label{s:background}

A \emph{frequency square} of order $n$, with $s$ symbols, and frequencies
$\ztosublist{\lambda}{s-1}$ is an $n\times n$ array with a symbol multiset $S$
containing $\lambda_i$ copies of symbol $i$, such that each row and each column
is a permutation of $S$. A frequency square is \emph{balanced} if each symbol
has the same multiplicity in $S$. A balanced frequency square for which each
symbol occurs once in $S$ is a \emph{Latin square}.

We index the rows and columns of $n\times n$ matrices using
$\{0,1,\dots,n-1\}$.  Two frequency squares $F_1$ and $F_2$ with frequencies
$\ztosublist{\lambda}{s-1}$ and $\ztosublist{\mu}{t-1}$ are \emph{orthogonal}
if for each $\ell \in\zto{s-1}$ and $m \in \zto{t-1}$ there are
$\lambda_\ell\mu_m$ ordered pairs $(F_1[i,j], F_2[i,j]) = (\ell,m)$ with $i,j
\in \zto{n-1}$. A set of $k$ frequency squares of order $n$ is \emph{mutually
orthogonal} if every pair of squares in the set is orthogonal. In this case we
write that the frequency squares form a set of \mofs{k}{n}. If every square in
the set of \mofs{k}{n} is Latin, then the set is a set of $k$ \emph{mutually
orthogonal Latin squares}, abbreviated \mols{k}{n}. Although it is traditional
to talk about \emph{sets} of MOFS or MOLS, in this paper we will often care
about the order of the elements within that set. Hence we will often think of
\emph{lists} of MOFS or MOLS.

It is well known that a \net{k}{n} is equivalent to a set of \molsb{k-2}{n}.
Under this correspondence, the lines of two arbitrary parallel classes of the
net correspond to the rows and the columns, while each of the remaining $k-2$
parallel classes correspond to the symbols in a different square. A
\emph{transversal} $t$ in a \net{k}{n} $N$ is a set of $n$ points in $N$ such
that $t$ is orthogonal to every line in $N$. In a related notion, a
\emph{transversal} in a Latin square is defined to be a set of $n$ cells that
have no rows nor columns in common, such that every symbol occurs exactly once
within the cells. For a set $\LL$ of \molsb{k-2}{n}, a \emph{common
transversal} is a set $t'$ of $n$ cells such that $t'$ is a transversal in each
of the $k-2$ Latin squares of $\LL$. A common transversal in a set of MOLS is
equivalent to a transversal in the corresponding net. A \net{k}{n} $N$ has $n$
disjoint transversals if and only if $N$ is a subnet of some \netb{k+1}{n}.

Let $\ztosublist{s}{t-1}$ be a list of positive integers, $\ztosublist{S}{t-1}$
be a list of multisets such that, for $0\le i<t$, the support of $S_i$ is
$\zto{s_i-1}$ and let $\lambda^i_k$ be the multiplicity of the symbol $k$ in
the multiset $S_i$. A \emph{mixed type orthogonal array} (\moa) with multisets
$\ztosublist{S}{t-1}$ is an $s\times t$ array in which the $i^\text{th}$ column
is a permutation of the multiset $S_i$, for $0\le i<t$. Also, for every pair of
disjoint columns $i$ and $j$ and symbols $k \in S_i$ and $\ell \in S_j$ there
should be exactly $\lambda^i_k\lambda^j_\ell/s$ rows where $k$ occurs in column
$i$ and $\ell$ occurs in column $j$.  Mixed type orthogonal arrays generalise
MOFS. From any \mofs{(t-2)}{n} $F_0,\dots,F_{t-3}$ we can build an $n^2\times
t$ \moa\ in which each row records the index $i$ of a row, the index $j$ of a
column and then the symbols $F_0[i,j],F_1[i,j],\dots,F_{t-3}[i,j]$. Moreover,
if a \moa\ has $s_0=s_1=n$ and $\lambda_k^0=\lambda_k^1=n$ for each symbol
$k\in\{0,\dots,n-1\}$ then the \moa\ can be formed from a set of MOFS in this
way. If $\lambda_k^i=n$ for all $i$ and $k$ then the \moa\ corresponds to a set
of MOLS.

We do not attach any significance to the order of the rows in a \moa, so we
consider two {\moa}s to be the same if one can be obtained from the other by
any permutation of the rows.  A \moa\ $O_1$ is a \emph{conjugate} of another
\moa\ $O_2$ if $O_2$ can be obtained by permuting the columns of $O_1$.  Two
{\moa}s $O_1$ and $O_2$ are \emph{isotopic} if $O_1$ can be obtained from $O_2$
by, for each $i$, applying some permutation of $\{0,\dots,s_i-1\}$ to the
symbols within column $i$.  Two {\moa}s $O_1$ and $O_2$ are \emph{paratopic},
or belong to the same \emph{species}, if $O_2$ is isotopic to a conjugate of
$O_1$. Any $s\times t$ {\moa} has $t!$ conjugates. In this paper we are only
interested in {\moa}s that represent MOFS. For that reason we will only
consider $n^2\times t$ {\moa}s with $t \geq 3$ and multisets $S_0$ and $S_1$
that have support $\zto{n-1}$, with multiplicity $n$ for each symbol in $S_0$
and $S_1$. The multisets $S_0$ and $S_1$ will represent the rows and columns of
the MOFS, respectively. Furthermore, for any {\moa} $O$, with multisets
$\otosublist{S}{t}$, we only consider conjugates of $O$ that respect the symbol
sets in the sense that the multisets for the conjugate also equal
$\otosublist{S}{t}$.

Let $F$ be a frequency square with symbol set of size $s$ and let $t\le s$. A
\emph{symbol homotopy} $\theta: \zto{s-1} \rightarrow \zto{t-1}$ is a mapping
from the symbol set of $F$ to a symbol set of size $t$ resulting in a frequency
square obtained by identifying symbols in $F$. Let $F'$ be a frequency square
with symbol set of size $t$. The frequency square $F$ is a \emph{refinement} of
$F'$ if a symbol homotopy $\theta$ exists such that $F' = \theta(F)$. A symbol
homotopy of a list of frequency squares $\otosublist{F}{k}$ is a list of
functions $(\theta_0,\dots,\theta_{k-1})$ such that $\theta_i$ is a symbol
homotopy of $F_i$, for $0\leq i<k$. A list of frequency squares
$\otosublist{G}{k}$ is a \emph{refinement} of $\otosublist{F}{k}$ if
$\otosublist{F}{k} = (\theta_0(G_0),\dots, \theta_{k-1}(G_{k-1}))$.

Let $\rel_1$ and $\rel_2$ be two relations on a \net{k}{n}, and suppose that
$\rel_2$ is a trivial relation. Then the symmetric difference of $\rel_1$ and
$\rel_2$ is a relation obtained by complementing the relational lines of
$\rel_1$ relative to each of the parallel classes that are included in
$\rel_2$. For example, by complementing two parallel classes of weight $2$ in a
relation of type $2^34^26$ we obtain a relation of type $24^268^2$.  An
\emph{odd relation} can be obtained from a relation by complementing a set of
relational lines contained in a single parallel class. Equivalently, an odd
relation is any set of lines $\relodd \subseteq L$ satisfying the equation
$\sum_{x\in \relodd} \alpha(x) = \bvec{1}$. Each point $p\in P$ in an odd
relation has weight $w(p) \equiv 1 \pmod{2}$.

\bigskip

The cardinality $N(n)$ of a largest set of MOLS$(n)$ has been of interest to
the mathematical community for centuries. Euler, while looking for new ways to
construct magic squares, famously conjectured that $N(4t-2)=1$ for all $t \geq
1$.  \citet*{tarry_probleme_1901} used exhaustive case analysis to prove
$N(6)=1$, confirming Euler's conjecture in that case.  However,
\citet*{bose_falsity_1959} discovered a counterexample to the conjecture of
order 22. Later the same year \citet*{parker_construction_1959} discovered a
counterexample of order 10. \citet*{bose_further_1960} provided a construction
that shows that $N(n)\ge2$ for all $n \geq 7$. \citet{egan_enumeration_2016}
counted and classified all sets of MOLS$(n)$ for $n\le9$. However, even the
value of $N(10)$ has proved elusive despite considerable effort (see
\citet{mckay_small_2007}). This context provides the motivation for the present
work.

It is easily seen that $N(n)\le n-1$.  A set of \molsb{n-1}{n} is said to be
\emph{complete}.  A complete set of MOLS$(n)$ exists if and only if there is a
projective plane of order $n$. As there is a Desarguesian plane for all prime
power orders, a complete set of MOLS exists for each prime power order.
\citet*{bruck_finite_1963} used geometric arguments to bound the size of the
largest set of MOLS when no complete set exists, showing that if $N(n) < n-1$
then $N(n) < n - 1 -(2n)^{1/4}$. This result was later improved by
\citet{metsch_improvement_1991}.

\citet*{stinson_short_1984} used the theory of transversal designs to give a
short proof of the non-existence of a pair of MOLS of order $6$. This was a
significant step given the exhaustive cases analysis required by
\citet*{tarry_probleme_1901}. Later, \citet*{dougherty_coding_1994-1} combined
net and coding theoretic techniques to give another short proof that $N(6)=1$.
\citet*{lam_non-existence_1989} proved the non-existence of a projective plane
of order $10$ using an extensive computational search. Thus there does not
exist a complete set of \mols{9}{10} and it follows from the theorems of
\citet{bruck_finite_1963} and \citet{metsch_improvement_1991} that $N(10)\le6$.
If all of the types of relations in Dukes and Howard's catalogue could be ruled
out, then Theorem~\ref{t:DH} would show that $N(10)\le3$. This paper takes a
first step in that direction, ruling out two of the types.

\section{Templates}\label{s:templates}

In this section we introduce a structure called a template, which will assist
in our study of relations.  Throughout this section $n$ will be an \emph{even}
positive integer.

A \emph{template} $T$ of order $n$ and type $(\lambda_0, \lambda_1,
\dots,\lambda_{k-1})$, with $\lambda_i$ even, is a list $(F_0, F_1, \dots,
F_{k-3})$ of \mofs{(k{-}2)}{n} with the following properties.
\begin{enumerate}
\item \label{def:tempproprowcol} $T$ has rows $(0,
  1,\dots,n-1)$ and columns $(0,
  1,\dots,n-1)$. The rows $(0, 1,\dots,
  \lambda_0-1)$ and the columns
  $(0,1,\dots,\lambda_1-1)$ are deemed to be relational.
  
\item \label{def:temppropfreq} For each $t \in \{2,\dots,k-1\}$ the
  binary frequency square $F_{t-2}$ has frequencies 
  $(n-\lambda_t, \lambda_t)$.

\item The type of a point $(i,j)$ is the $k$-tuple
  $\big(x,y, F_0[i,j], F_1[i,j],\dots,F_{k-3}[i,j]\big)$
  where $x=1$ if row $i$ is relational and 0 otherwise, and $y=1$
  if column $j$ is relational and 0 otherwise.
  A point $(i,j)$ is relational relative to a
  frequency square $F_\ell$ if $F_\ell[i,j] = 1$.

\item \label{def:tempparity}
  The weight of a point is the sum (in $\mathbb{Z}$) of the elements of its
  type $k$-tuple.  All points $p$ in $T$ have weight
  $w(p) \equiv \chi \pmod{2}$, where $\chi=(\sum_i \lambda_i)/2$.
  
\end{enumerate}

For simplicity we represent the type of a point in a template as a binary
string (i.e.~we omit the commas and parentheses).  Two templates are of
equivalent type if there is a permutation of one type that is equal to the
other. Two templates of equivalent type are \emph{isomorphic} if one template
can be obtained from the other by applying any sequence of the following
operations:
\begin{enumerate}
	\item Uniformly permuting the rows of the frequency squares in
          the template. Such a permutation must map relational rows to
          relational rows.
	\item Uniformly permuting the columns of the frequency squares
          in the template. Such a permutation must map relational
          columns to relational columns.
	\item Transposing all of the frequency squares in the template.
	\item Reordering the frequency squares in the template.
\end{enumerate}

A \net{k}{n} $N$ is said to be a \emph{refinement} of a template $T$ if the
list of \molsb{k-2}{n} that is equivalent to $N$ is a refinement of $T$.
Implicit in this statement is the need for us to have fixed the ordering of the
parallel classes of $N$, say by indexing them as $\Pi_0,\dots,\Pi_{k-1}$.
Under this ordering, the parallel class $\Pi_0$ corresponds to the rows in $T$,
the parallel class $\Pi_1$ corresponds to the columns of $T$ and each remaining
parallel class $\Pi_i$ corresponds to the frequency square $F_{i-2}$.  The
properties imposed on the frequency squares of the template guarantee that if a
net is a refinement of the template then the net satisfies a relation (or odd
relation) $\rel$ of type $(\lambda_0,\lambda_1,\dots,\lambda_{k-1})$.  Whether
$\rel$ is an odd relation depends on the parity of $\chi$.  Each relational
line in $\rel$ corresponds to a relational row, relational column, or a set of
$n$ relational points relative to a frequency square in the template. For every
net $N$ with a non-trivial relation, there exists a template such that $N$ is a
refinement of the template. This can be seen by considering the list of MOLS
equivalent to $N$ and applying a symbol homotopy that maps each symbol in each
square to 0 or 1 according to whether the line corresponding to that symbol is
non-relational or relational in $N$, respectively.  However, starting with a
template, it may or may not be possible to find a net that refines it. In
\sref{ss:temp4444} we will give a template that is not refined by any net.
However, we are only really interested in templates that have a refinement to a
net. So in the following sections we will often hypothesise the existence of
such a refinement when studying the structure of templates.

\subsection{Template Structures}\label{ss:tempstructure}

Let $\BB^k$ be the set of all length $k$ binary strings containing an even
number of ones. Let $\BB^k_{i,j}$ be the subset of $\BB^k$ such that $\bvec{b}
\in \BB^k_{i,j}$ if and only if both the $i^\text{th}$ and $j^\text{th}$ bits
of $\bvec{b}$ are $1$. Similarly, let $\bar{\BB}^k_{i,j}$ be the subset of
$\BB^k$ such that $\bvec{b} \in \bar{\BB}^k_{i,j}$ if and only if the
$i^\text{th}$ bit of $\bvec{b}$ is $1$ and the $j^\text{th}$ bit of $\bvec{b}$
is $0$. For a particular template $T$, let $t_\bvec{b}$ be the number of points
in $T$ of type $\bvec{b}$. The bitwise complement of a type $\bvec{b}$ is
denoted $\bar{\bvec{b}}$.

Let $N$ be a \net{k}{n} that admits a relation $\rel$ of type $(\lambda_0,
\lambda_1, \dots, \lambda_{k-1})$. By considering the orthogonality of lines in
$N$ we can derive a set of equations that the points of a template must satisfy
if the template is to be refined by $N$. For later simplicity, let $g_i =
\frac{n}{2}-\lambda_i$, for $0\leq i < k$. We have
\begin{align}
	\sum_{\bvec{b}\in \BB^k} t_\bvec{b}              & =
	n^2,\label{eqn:100points}
	\\
	\sum_{\bvec{b} \in \BB^k_{i,j}} t_\bvec{b}       & =
	\left(\frac{n}{2}-g_i\right)
	\left(\frac{n}{2}-g_j\right)\label{eqn:rellines}
	\quad
	\text{for all
		$i < j$,}
	\\
	\sum_{\bvec{b} \in \BB^k_{i,j}} t_{\bar{\bvec{b}}}    & =
	\left(\frac{n}{2}+g_i\right)
	\left(\frac{n}{2}+g_j\right)\label{eqn:nonrellines}
	\quad
	\text{for all
		$i < j$,}
	\\
	\sum_{\bvec{b} \in \bar{\BB}^k_{i,j}} t_\bvec{b} & =
	\left(\frac{n}{2}-g_i\right)\left(\frac{n}{2} +
	g_j\right)\label{eqn:relvnonrel} \quad
	\text{for all
		$i \not =j$.}
\end{align}

\Cref{eqn:100points} simply states that a net of order $n$ contains $n^2$
points. The set of equations $(\ref{eqn:rellines})$ are derived from the fact
that for every pair of parallel classes $\Pi_i$ and $\Pi_j$ in $N$, each
relational line from $\Pi_i$ shares exactly one point with each relational line
in $\Pi_j$. Similarly, the equations (\ref{eqn:nonrellines}) come from the fact
that every non-relational line in $\Pi_i$ shares exactly one point with each
non-relational line in $\Pi_j$. Finally the equations $(\ref{eqn:relvnonrel})$
are a consequence of every relational line in $\Pi_i$ sharing exactly one point
with each non-relational line in $\Pi_j$.

The equations
\labelcref{eqn:100points,eqn:rellines,eqn:nonrellines,eqn:relvnonrel} hold for
all $6$-nets of even order. Let $(\lambda_0,\lambda_1,\dots,\lambda_5)$ be the
type of a relation. The linear system consists of $61$ equations with a
variable for each of the $32$ point types. Unfortunately, the resulting system
is underdetermined, with $10$ degrees of freedom.  \Cref{table:solcounts} gives
the number of nonnegative integer solutions to the equations
\labelcref{eqn:100points,eqn:rellines,eqn:nonrellines,eqn:relvnonrel} for
certain relations on a \net{6}{10}.  \citet{dukes_group_2014} showed that every
relation on a \net{6}{10} that has $0 < \lambda_i < 10$ for $0 \le i \le 5$ is
equivalent to one of those listed in the table. The same paper demonstrates a
restriction which is not included in the system of equations
\labelcref{eqn:100points,eqn:rellines,eqn:nonrellines,eqn:relvnonrel}.  In
particular, they show that, for $i\in\{0,\dots,5\}$,
\begin{equation}\label{e:regcond}
  t_{111111} \leq \lambda_i\left\lfloor\frac{
    \sum_{j\ne i}\lambda_j-10}{4}\right\rfloor.
\end{equation}
An easy way to deduce \eref{e:regcond} is to consider a relational line $\ell$
in the $i$-th parallel class. All ten points in $\ell$ must meet an odd number
of the relational lines that are orthogonal to $\ell$. Allocating one such line
to each point, we then have $\sum_{j\ne i}\lambda_j-10$ lines as yet
unallocated, and each weight 6 point on $\ell$ uses four of them.  It follows
that \eref{e:regcond} holds, since we have $\lambda_i$ choices for $\ell$ and
every weight 6 point must lie on one of them.  In practice \eref{e:regcond}
only removes $16$ solutions for relations of type $2^44^2$ and $16$ solutions
for relations of type $4^6$. The only solution for relations of type $2^56$,
marked with a $\dagger$, satisfies \eref{e:regcond} but can be eliminated by
\cite[Prop.~2.9]{dukes_group_2014}.

\begin{table}[ht]
  \begin{center}
    \begin{tabular} {crr}
      \toprule
      Relation type & Solutions to \labelcref{eqn:100points,eqn:rellines,eqn:nonrellines,eqn:relvnonrel} & Solutions to \labelcref{eqn:100points,eqn:rellines,eqn:nonrellines,eqn:relvnonrel,e:regcond}
      \\
      \midrule
      $2^6$         & 1 & 1
      \\
      $2^56^1$      & 1\rlap{$^\dagger$} & 1\rlap{$^\dagger$}
      \\
      $2^44^2$      & 146 & 130
      \\
      $2^34^26^1$   & 1\,302 & 1\,302
      \\
      $2^24^4$      & 5\,286 & 5\,286
      \\
      $2^14^46^1$   & 61\,113 & 61\,113
      \\
      $4^6$         & 1\,832\,069 & 1\,832\,053
    \end{tabular}
    \caption{\label{table:solcounts}The number of solutions
      to the constraints on the $t_{\bvec{b}}$
	}
  \end{center}
\end{table}

Although our system of equations usually has many solutions, there is a
relationship between the counts of complementary point types:

\begin{theorem} \label{thm:complements}
	Let $n$ be even and let $N = (P, L)$ be a \net{6}{n} with relation
	$\rel$ of type $(\lambda_0, \lambda_1,\dots,\lambda_5)$. Then,
	\begin{align}
		t_{\bvec{b}} + t_{\bar{\bvec{b}}} = \frac{1}{16}n^2 +
		\frac{1}{4}\sum_{i < j}(-1)^{\bvec{b}_i + \bvec{b}_j}g_ig_j, \quad &
		\text{for all $\bvec{b}\in \BB^6$}.
		\label{eqn:allpoints}
	\end{align}
\end{theorem}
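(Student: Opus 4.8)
The plan is to derive the identity purely from the linear system already established, by taking an appropriate $\FField{2}$-independent-looking combination of equations \eref{eqn:rellines}, \eref{eqn:nonrellines}, and \eref{eqn:relvnonrel}. The key observation is that for a fixed type $\bvec{b}\in\BB^6$, the quantity $t_{\bvec{b}}+t_{\bar{\bvec{b}}}$ should be expressible as a signed sum over the pair-sum constraints, where the sign attached to the pair $(i,j)$ is exactly $(-1)^{\bvec{b}_i+\bvec{b}_j}$. So the first thing I would do is fix $\bvec{b}$ and, for each pair $i<j$, classify: either $\bvec{b}_i=\bvec{b}_j$ (both bits equal) or $\bvec{b}_i\ne\bvec{b}_j$. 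When the bits agree, I want to use the sum of \eref{eqn:rellines} and \eref{eqn:nonrellines} (the ``$\BB^k_{i,j}$'' constraints, counting types with a $1$ in both positions $i,j$ together with their complements); when the bits disagree, I want to use \eref{eqn:relvnonrel} (the ``$\bar\BB^k_{i,j}$'' constraints). Each such equation, summed over all $\binom{6}{2}=15$ pairs with the sign $(-1)^{\bvec{b}_i+\bvec{b}_j}$, should on the left-hand side produce a linear combination of the $t_{\bvec{c}}$ in which every coefficient is controlled by how many of the $15$ pairs ``see'' a given type $\bvec{c}$ with the right pattern.

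The heart of the argument is therefore a counting/parity lemma on the left-hand side. Concretely, I would write the target signed sum of constraints as $\sum_{i<j}(-1)^{\bvec{b}_i+\bvec{b}_j}(\text{LHS of the relevant equation})$ and expand it as $\sum_{\bvec{c}\in\BB^6} c_{\bvec{b}}(\bvec{c})\, t_{\bvec{c}}$. For a given $\bvec{c}$, the coefficient $c_{\bvec{b}}(\bvec{c})$ is a sum over pairs $(i,j)$ of $\pm1$, with the contribution present only when $\bvec{c}$ has the pattern at $(i,j)$ matching whichever equation that pair selected. I expect that this coefficient evaluates, via a short computation in terms of the agreement set $A=\{i:\bvec{b}_i=\bvec{c}_i\}$, to a value that is a constant (the same for $\bvec{c}=\bvec{b}$ and $\bvec{c}=\bar{\bvec{b}}$) plus lower-order terms that cancel for all other $\bvec{c}$. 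The cleanest route is to recognise the sign $(-1)^{\bvec{b}_i+\bvec{b}_j}$ as $(-1)^{\bvec{b}_i}(-1)^{\bvec{b}_j}$, so that the whole signed sum factors as a quadratic form $\big(\sum_i \varepsilon_i S_i\big)$-type expression, where $\varepsilon_i=(-1)^{\bvec{b}_i}$ and $S_i$ is the column-sum vector of relational/non-relational incidences in parallel class $\Pi_i$. This is the step I would lean on to collapse the $15$ pairwise constraints into a single square.

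The right-hand side is then a routine but essential check: substituting the stated RHS values, the signed sum $\sum_{i<j}(-1)^{\bvec{b}_i+\bvec{b}_j}$ applied to the products $(\tfrac n2\mp g_i)(\tfrac n2\mp g_j)$ must reduce to $\tfrac{1}{16}n^2+\tfrac14\sum_{i<j}(-1)^{\bvec{b}_i+\bvec{b}_j}g_ig_j$. Expanding each product, the pure $\tfrac{n^2}{4}$ terms sum against the signs to give $\tfrac{n^2}{4}\sum_{i<j}(-1)^{\bvec{b}_i+\bvec{b}_j}$, the cross terms in $g$ must vanish after summation over all $\bvec{b}\in\BB^6$-consistent signs (using that $\sum_i\varepsilon_i$ is even, since $\bvec{b}$ has evenly many ones), and the $g_ig_j$ terms survive with coefficient $\tfrac14$; I would verify $\sum_{i<j}(-1)^{\bvec{b}_i+\bvec{b}_j}=\tfrac14(\varepsilon^2-6)$ with $\varepsilon=\sum_i\varepsilon_i\in\{-6,-4,\dots,6\}$ even, which pins down the $\tfrac{1}{16}n^2$ constant.

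I expect the main obstacle to be the bookkeeping in computing $c_{\bvec{b}}(\bvec{c})$ and showing it is $16$ (say) at $\bvec{c}\in\{\bvec{b},\bar{\bvec{b}}\}$ and $0$ otherwise — in particular making sure the choice of which equation to invoke for agreeing versus disagreeing pairs is consistent with the membership conditions defining $\BB^k_{i,j}$ and $\bar{\BB}^k_{i,j}$, and that complementary types $\bvec{c},\bar{\bvec{c}}$ are handled symmetrically so that only the $+$ combination $t_{\bvec{c}}+t_{\bar{\bvec{c}}}$ can appear. Once the factorisation into $\varepsilon_i$ is in place, this reduces to the elementary identity $(\sum_i\varepsilon_i x_i)^2=\sum_i x_i^2+2\sum_{i<j}\varepsilon_i\varepsilon_j x_ix_j$ evaluated on the incidence counts, so I anticipate the difficulty is entirely organisational rather than conceptual.
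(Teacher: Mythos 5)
Your high-level plan --- exhibiting $t_{\bvec{b}}+t_{\bar{\bvec{b}}}$ as an explicit linear combination of the constraints, with the constraint invoked at each pair $(i,j)$ dictated by $\bvec{b}$ --- can be made to work, but the specific combination you propose does not, and the failure occurs precisely at the step you defer (``I expect that this coefficient evaluates\dots''). Take the most favourable reading of your proposal: for an agreeing pair ($\bvec{b}_i=\bvec{b}_j$) you add \eref{eqn:rellines} and \eref{eqn:nonrellines}, whose combined left-hand side is $\sum_{\bvec{c}:\bvec{c}_i=\bvec{c}_j}t_{\bvec{c}}$, with sign $+1$; for a disagreeing pair you add the two instances of \eref{eqn:relvnonrel} for $(i,j)$ and $(j,i)$, whose combined left-hand side is $\sum_{\bvec{c}:\bvec{c}_i\ne\bvec{c}_j}t_{\bvec{c}}$, with sign $-1$. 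Writing $\varepsilon_i=(-1)^{\bvec{b}_i}$ and $\delta_i=(-1)^{\bvec{c}_i}$, the coefficient of $t_{\bvec{c}}$ in your signed sum is
\begin{equation*}
c_{\bvec{b}}(\bvec{c})=\sum_{i<j}\left(\frac{1+\varepsilon_i\varepsilon_j}{2}\cdot\frac{1+\delta_i\delta_j}{2}-\frac{1-\varepsilon_i\varepsilon_j}{2}\cdot\frac{1-\delta_i\delta_j}{2}\right)=\frac{1}{2}\sum_{i<j}\bigl(\varepsilon_i\varepsilon_j+\delta_i\delta_j\bigr),
\end{equation*}
which depends only on the weights of $\bvec{b}$ and $\bvec{c}$, not on how they overlap. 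Hence every type of the same (or complementary) weight as $\bvec{b}$ receives exactly the same coefficient as $\bvec{b}$ and $\bar{\bvec{b}}$, and no admixture of \eref{eqn:100points} can then isolate $t_{\bvec{b}}+t_{\bar{\bvec{b}}}$. Concretely, if $\bvec{b}$ has weight $2$ your signed sum equals $7(t_{000000}+t_{111111})-\sum_{\bvec{c}\text{ of weight }2,4}t_{\bvec{c}}$, which upon adding \eref{eqn:100points} becomes $8(t_{000000}+t_{111111})-n^2$: you have merely re-derived the weight-zero identity \eref{e:comprel}, and all dependence on $\bvec{b}$ beyond its weight is lost. (The unsymmetrised reading, with a single equation from \eref{eqn:relvnonrel} per disagreeing pair, is worse, as it is not even invariant under $\bvec{c}\mapsto\bar{\bvec{c}}$.) The quadratic-form factorisation you invoke at the end is the signature of this collapse, not a route around it.

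The identity \eref{eqn:allpoints} \emph{is} a linear consequence of the system, but via the opposite pairing and uniform (not signed) coefficients: take \eref{eqn:100points} and, for each ordered pair $(i,j)$, subtract $\tfrac18$ of the unique constraint among \eref{eqn:rellines}, \eref{eqn:nonrellines}, \eref{eqn:relvnonrel} whose defining pattern in coordinates $(i,j)$ is $(1-\bvec{b}_i,\,\bvec{b}_j)$. Thus \eref{eqn:relvnonrel} is used for the \emph{agreeing} pairs, while \eref{eqn:rellines} and \eref{eqn:nonrellines} are used for the \emph{disagreeing} ones. A type $\bvec{c}\notin\{\bvec{b},\bar{\bvec{b}}\}$ then lies in exactly $w(6-w)=8$ of the $30$ selected constraints, where $w\in\{2,4\}$ is the weight of $\bvec{c}\oplus\bvec{b}$, so its coefficient cancels; $\bvec{b}$ and $\bar{\bvec{b}}$ lie in none, and the right-hand sides sum to exactly the right-hand side of \eref{eqn:allpoints}. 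The paper proves the theorem with far less bookkeeping: it establishes the weight-zero case \eref{e:comprel} by exactly this combination (for $\bvec{b}=\bvec{0}$ every pair agrees, so one subtracts $\tfrac18$ of each equation in \eref{eqn:relvnonrel}), and then obtains weight-$2$ types by replacing $\rel$ with its symmetric difference with the trivial relation $\Pi_a\cup\Pi_b$, which negates $g_a$ and $g_b$ and exchanges the roles of the relevant point types; invariance of \eref{eqn:allpoints} under $\bvec{b}\mapsto\bar{\bvec{b}}$ finishes weights $4$ and $6$. Your linear-algebra route, once corrected as above, is essentially that complementation argument made explicit.
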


\begin{proof}
	Let $\bvec{z}$ be the point type of weight $0$ and $\bar{\bvec{z}}$ be
	its complement. \Cref{eqn:100points} contains every point type exactly
	once and every point type $\bvec{b} \in \BB^6 \setminus \{\bvec{z},
	\bar{\bvec{z}}\}$ is in exactly $8$ equations from
	\eref{eqn:relvnonrel}. It follows that the equation obtained from
	\eref{eqn:100points} by subtracting $1/8^\text{th}$ of each equation in
	\eref{eqn:relvnonrel} gives,
	\begin{align}\label{e:comprel}
		t_\bvec{z} + t_{\bar{\bvec{z}}} = \frac{1}{16}n^2 +
		\frac{1}{4}\sum_{i < j}g_ig_j.
	\end{align}

	Now consider a point type $\bvec{z}'$ of weight $2$ in $\rel$ with
	$\bvec{z}'_a = \bvec{z}'_b = 1$, where $a \neq b$.  Complementing the
	relational lines in $\Pi_a$ and $\Pi_b$ gives a new relation $\rel'$
	for which $g'_a = -g_a$ and $g'_b= -g_b$ and $g'_k = g_k$ for all
	$k\in\{0,\dots,5\}\setminus\{a,b\}$. Moreover, points of type
	$\bvec{z}'$ in $\rel$ have weight $0$ in $\rel'$ and vice versa.  Thus,
	there are $t_{\bvec{z}'}$ points of weight $0$ and
	$t_{\bar{\bvec{z}}'}$ points of weight $6$ in $\rel'$ (where each
	$t_{\bvec{b}}$ refers to counts in $\rel$, not in $\rel'$). So from
	(\ref{e:comprel}), we have
	\begin{align*}
	  t_{\bvec{z}'} + t_{\bar{\bvec{z}}'}
          = \frac{1}{16}n^2 + \frac{1}{4}\sum_{i < j}g'_ig'_j
          = \frac{1}{16}n^2 + \frac{1}{4}\sum_{i < j}(-1)^{\bvec{z}'_i + \bvec{z}'_j}g_ig_j.
	\end{align*}
	We have thus shown that (\ref{eqn:allpoints}) holds when $\bvec{b}$ has
	weight $0$ or $2$ in $\rel$. Moreover, the truth of
	(\ref{eqn:allpoints}) is clearly preserved if we replace $\bvec{b}$ by
	its complement. Since every point type in $\BB^6$ has weight $0$, $2$,
	$4$, or $6$, the result follows.
\end{proof}

A relation $\rel$ in a \net{6}{n} that ignores a parallel class, e.g.\ is of
type $(\lambda_0, \lambda_1, \dots, \lambda_4, 0)$, is a relation on a
\net{5}{n} that is a subnet of the \net{6}{n}. In some ways, every relation on
a \net{5}{n} can be thought of in this way, regardless of whether the
\net{5}{n} embeds in a \net{6}{n}. The reason is that the equations
\labelcref{eqn:100points,eqn:rellines,eqn:nonrellines,eqn:relvnonrel} still
hold for any \net{5}{n} if we invent a sixth parallel class that has zero
relational lines (it does not matter whether such a parallel class is actually
achievable). By taking $\lambda_5=0$ we ensure that there can be no points
incident with a relational line in the sixth parallel class. So for any
$\bvec{b}\in\BB^6$ one of $\bvec{b}$ or $\bar{\bvec{b}}$ will necessarily be
zero, and the other can be determined from \Cref{thm:complements}. In other
words, the number of points of each type is completely determined. This gives
the following equations for a \net{5}{n}:
\begin{align}\label{e:5neteq}
	t_{\bvec{b}} = \frac{1}{16}n^2 + \frac{1}{8}n\sum_i(-1)^{\bvec{b}_i}g_i + \frac{1}{4}
	\sum_{i < j}(-1)^{\bvec{b}_i + \bvec{b}_j}g_ig_j \quad &
	\text{for all $\bvec{b}\in \BB^5$}.
\end{align}
The same approach can be used when considering a \net{4}{n}. In this case we
can take $\lambda_4 = \lambda_5 = 0$, implying that there are no points
incident with a relational line in either $\Pi_4$ or $\Pi_5$. This gives the
following solution for the counts, by type, of all points in a \net{4}{n}:
\begin{align}\label{e:4neteq}
	t_{\bvec{b}} = \frac{1}{8}n^2 + \frac{1}{4}n\sum_i(-1)^{\bvec{b}_i}g_i +
	\frac{1}{4}\sum_{i < j}(-1)^{\bvec{b}_i + \bvec{b}_j}g_ig_j \quad &
	\text{for all $\bvec{b}\in \BB^4$}.
\end{align}

\citet{dukes_group_2014} eliminated the following types of relations on $4$
parallel classes: $2^4$, $2^36$, $2^24^2$ and $24^26$. In the first three of
these cases, (\ref{e:4neteq}) gives $t_{1111} < 0$, providing an alternative
way to see that such relations are impossible. Dukes and Howard also eliminated
a relation on $5$ parallel classes of type $2^44$. In this case
(\ref{e:5neteq}) with $\bvec{b} \in \{11101, 11011, 10111, 01111\}$ gives
$t_\bvec{b} < 0$.

\subsection{Templates for Relations of Type $4^4$}\label{ss:temp4444}

This subsection describes the structure of a template of type $4^4$ that has a
refinement to a $\net{4}{10}$.  The relational rows and relational columns of a
template impose a quadrant structure on the frequency squares in the template.
The $4$ subarrays are defined as follows:
\begin{itemize}
	\item $Q_1$ contains all points that are in relational rows and
	also in relational columns.
	\item $Q_2$ contains all points that are in
	relational rows and non-relational columns.
	\item $Q_3$ contains points that are in
	relational columns and non-relational rows.
	\item $Q_4$ contains points that are in non-relational rows and also in
	non-relational columns.
\end{itemize}

From \eref{e:4neteq} we know that $t_{0000} = 24$, $t_{1100} = t_{1010} =
t_{0110} = t_{1001} = t_{0101} = t_{0011} = 12$, and $t_{1111} = 4$.  As every
point in $Q_1$ is contained in a relational row and a relational column, $Q_1$
contains points of type $1100$ and $1111$ representing points of weight $2$ and
$4$. Similarly, $Q_2$ contains points of type $1001$ and $1010$ and $Q_3$
contains points of type $0101$ and $0110$ representing the points of weight $2$
contained in either a relational row or a relational column but not both.
Lastly, the points in $Q_4$ are in non-relational rows and columns and thus
have weight $0$ or $2$ and type $0000$ or $0011$. In the following lemmas, we
will use the symbol $*$ as a wildcard which could represent either a $1$ or $0$
as required.

\begin{lemma}\label{lem:4444template}
   Let $N$ be a \net{4}{10} that is a refinement of a template $T$ of type
   $4^4$.
  \begin{enumerate}[label={\rm(\roman*)}]
  \item The four points of weight $4$ contained in template $T$ do
    not share any row or column.
  \item Each row in $Q_2$ and column in $Q_3$ contains three points of
    type ${*}{*}10$ and three points of type ${*}{*}01$.
  \item Each relational line in $\Pi_2\cup\Pi_3$ is incident to
    exactly one point in $Q_1$ and three points in each of $Q_2$, $Q_3$,
    and $Q_4$.
  \item Each column in $Q_2$ and row in $Q_3$ contains two points of
    type ${*}{*}01$ and two points of type ${*}{*}10$. Moreover, each
    row and column in $Q_4$ contains two points of type $0011$ and
    four points of type $0000$.
  \end{enumerate}
\end{lemma}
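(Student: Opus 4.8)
The plan is to feed the exact point-type counts from \eref{e:4neteq} into two elementary bookkeeping facts and to solve a small linear system line by line. The first fact is that, for $n=10$ and type $4^4$, each $F_{t-2}$ is a binary frequency square with frequencies $(6,4)$, so every row and every column of $F_0$ and of $F_1$ contains exactly four $1$s. The second is that every relational line of $\Pi_2$ or $\Pi_3$, being orthogonal to the row- and column-classes, meets each row and each column of $N$ in exactly one point. Throughout I will use the quadrant dictionary recorded just before the lemma: $Q_1$ sees only types $1100,1111$; $Q_2$ only $1010,1001$; $Q_3$ only $0110,0101$; and $Q_4$ only $0000,0011$. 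In particular a cell of $Q_1$ carries $F_0=1$ exactly when $F_1=1$, i.e.\ exactly when it is one of the weight-$4$ points.

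For (i) and (ii) I would work one relational row at a time, deducing the column versions by transposing the template. Fix a relational row and let $r$ be the number of weight-$4$ points it meets in $Q_1$. Counting the $1$s of $F_0$ in this row gives $r$ plus the number of type-$1010$ cells in its $Q_2$-part, which equals $4$; counting the $1$s of $F_1$ gives $r$ plus the number of type-$1001$ cells, again $4$; and the six $Q_2$-cells of the row split between types $1010$ and $1001$. This system forces $r=1$ together with a $3$--$3$ split, which is precisely (ii) for rows of $Q_2$, and shows each relational row (and, by transposition, each relational column) contains a single weight-$4$ point, giving (i).

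Part (iv) is the same computation performed on a non-relational line, where the weight-$4$ cells are absent and the type-$0011$ cells of $Q_4$ take over their role. For a non-relational column the $1$s of $F_0$ are shared between the type-$1010$ cells of its four-cell $Q_2$-part and the type-$0011$ cells of its six-cell $Q_4$-part, and the $1$s of $F_1$ between types $1001$ and $0011$; the resulting system yields a $2$--$2$ split of the $Q_2$-part and exactly two type-$0011$ (hence four type-$0000$) cells in the $Q_4$-part. The transposed count on a non-relational row gives the $Q_3$ assertion and the remaining half of the $Q_4$ assertion.

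The hard part, and the only place where I expect the additive constraints to be insufficient, is (iii). Writing $q_m$ for the number of cells of a fixed relational line $\ell\in\Pi_2$ inside $Q_m$, the incidence facts alone give $q_1+q_2=q_1+q_3=4$ and $q_3+q_4=q_2+q_4=6$, i.e.\ $(q_1,q_2,q_3,q_4)=(q_1,4-q_1,4-q_1,2+q_1)$, leaving one free parameter that no frequency/relation count can remove. To eliminate it I would invoke orthogonality of the two Latin squares: with the square-$1$ symbol of $\ell$ held fixed, the square-$2$ symbol runs over all ten values once, so exactly four of the ten cells of $\ell$ have a relational square-$2$ symbol. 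Those four cells satisfy $F_0=F_1=1$, hence by the dictionary are the type-$1111$ cells of $Q_1$ together with the type-$0011$ cells of $Q_4$; thus $q_1+q_4=4$, and with $q_4=2+q_1$ this forces $q_1=1$ and $(q_1,q_2,q_3,q_4)=(1,3,3,3)$. Exchanging the two squares handles $\Pi_3$. Because (iii) is the sole step requiring orthogonality rather than merely the frequency-square and even-weight conditions used for (i), (ii) and (iv), I regard it as the crux of the lemma.
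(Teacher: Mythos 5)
Your proof is correct: the frequency-square row/column counts, the parity dictionary, and the orthogonality count in (iii) are all applied soundly, and each of the small linear systems does have the unique solution you claim. The organization, however, differs from the paper's. The paper runs a single weight count over lines of the net: the ten points of a relational line $\ell$ have total weight $10+3\times 4=22$, and since each point on $\ell$ has even weight between $2$ and $4$ this forces exactly one weight-$4$ point and nine weight-$2$ points on $\ell$, from which (i), (ii) and (iii) all follow at once; for (iv) the same count on a non-relational line gives total weight $12$, hence six weight-$2$ and four weight-$0$ points, and the splits follow by observing which non-relational lines of the orthogonal parallel classes must pass through the weight-$0$ points. You replace this pigeonhole by solving entry-count systems, and in (iii) you replace the weight argument entirely by the orthogonality count $q_1+q_4=4$ (the second square shows all ten symbols on $\ell$, exactly four of them relational) --- the same underlying incidence fact that the paper feeds into its weight total, but used directly to kill the free parameter. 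What your version buys is a clean separation of hypotheses: as you prove them, (i), (ii) and (iv) are facts about the template alone (frequency counts plus the parity condition), requiring no refining net, while (iii) is the only part that genuinely uses the net; this matches the paper's subsequent remark that its catalogue of $6\,965$ templates was generated by imposing only conditions (i), (ii) and (iv), with no assumption of refinability. What the paper's version buys is brevity and uniformity: one counting principle covering all four parts.
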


\begin{proof}
 	Consider a relational line $\ell$ with respect to a relation of type $4^4$
	 in a \net{4}{10}. The ten points on $\ell$ each have weight 2 or 4, and
	the total of their weights is $10+3\times4=22$. Hence there must be one
	weight 4 point and nine weight 2 points on $\ell$. Parts (i), (ii) and
	(iii) of the Lemma now follow easily.

	For part (iv) we consider instead a non-relational line $\ell'$. Its points
	all have weight 0 or 2, with a total weight of $3\times4=12$. Hence it
	has six points of weight 2 and four points of weight 0. Each parallel
	class orthogonal to $\ell'$ contains six non-relational lines. Four of
	these six lines hit points of weight zero in $\ell'$ and the other two
	must hit points of weight 2. Part (iv) follows.  
\end{proof}

\begin{figure}[!ht]
	\begin{center}
		\begin{tikzpicture}
			\begin{scope}
				\matrix (A) [square freq small] {
					11 & 00 & 00 & 00 & 01 & 01 &
					01 & 10 &
					10 & 10
					\\
					00 & 11 & 00 & 00 & 01 & 01 &
					01 & 10 &
					10 & 10
					\\
					00 & 00 & 11 & 00 & 10 & 10 &
					10 & 01 &
					01 & 01
					\\
					00 & 00 & 00 & 11 & 10 & 10 &
					10 & 01 &
					01 & 01
					\\
					10 & 10 & 01 & 01 & 11 & 11 &
					00 & 00 &
					00 & 00
					\\
					10 & 10 & 01 & 01 & 11 & 11 &
					00 & 00 &
					00 & 00
					\\
					10 & 10 & 01 & 01 & 00 & 00 &
					11 & 11 &
					00 & 00
					\\
					01 & 01 & 10 & 10 & 00 & 00 &
					11 & 11 &
					00 & 00
					\\
					01 & 01 & 10 & 10 & 00 & 00 &
					00 & 00 &
					11 & 11
					\\
					01 & 01 & 10 & 10 & 00 & 00 &
					00 & 00 &
					11 & 11
					\\
				};
				\draw[ultra thick, red] (A-1-1.north
				west)
				rectangle (A-10-4.south east);
				\draw[ultra thick, red] (A-1-1.north
				west)
				rectangle (A-4-10.south east);
				\draw[ultra thick] (A-1-1.north west)
				rectangle
				(A-10-10.south east);
			\end{scope}
		\end{tikzpicture}
	\caption{\label{fig:44symmetric}A template of order $10$, type $4^4$.}
	\end{center}
\end{figure}

Exhaustive computations found that there are $6\,965$ isomorphism classes of
templates of type $4^4$ satisfying conditions (i), (ii) and (iv) of
\Cref{lem:4444template}. See \Cref{fig:44symmetric} for one example.  We stress
that to generate this catalogue we did not impose an assumption that templates
must have a refinement to a net. Indeed, our next result will show that some of
the $6\,965$ templates cannot be refined to a net. 

\begin{lemma}\label{lem:nonexistence}
  Let $T$ be the template given in \fref{fig:44symmetric}.  There does not
  exist a net $N$ that is a refinement of~$T$.
\end{lemma}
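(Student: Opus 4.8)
The plan is to argue by contradiction. Suppose some \net{4}{10} $N$ refines the template $T$ of \fref{fig:44symmetric}, and let $(L_0,L_1)$ be the pair of \mols{2}{10} equivalent to $N$. Refinement means there are symbol homotopies $\theta_0,\theta_1$ with $\theta_i(L_i)=F_i$, so there are $4$-element symbol sets $A$ (for $L_0$) and $B$ (for $L_1$) with $L_0[i,j]\in A \iff F_0[i,j]=1$ and $L_1[i,j]\in B \iff F_1[i,j]=1$. Every occurrence of an $A$-symbol therefore lies in an $F_0=1$ cell, and dually for $B$. The first thing I would record is the set of $16$ cells that are relational for both $F_0$ and $F_1$: reading off \fref{fig:44symmetric}, these are the four diagonal cells of $Q_1$ (the weight-$4$ points of \lref{lem:4444template}(i)) together with the three $2\times2$ blocks of $Q_4$ on rows/columns $\{4,5\}$, $\{6,7\}$, $\{8,9\}$. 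Since $L_0$ and $L_1$ are orthogonal and all $16$ pairs of $A\times B$ must occur among exactly these $16$ cells, these cells realise each pair $(a,b)\in A\times B$ exactly once; in particular each symbol of $A$ (and of $B$) appears in exactly four of them, in four distinct rows and four distinct columns.

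The two structural levers I would then exploit are as follows. First, a within-block \emph{rainbow} condition: on each of the three $2\times2$ blocks, $L_0$ restricts to a $2\times2$ Latin square on a subset of $A$ and $L_1$ to one on $B$, and the four resulting $(L_0,L_1)$ pairs must be distinct. Hence if $L_0$ is an intercalate on a block (repeating one symbol on each block-diagonal) then $L_1$ must be a $2\times2$ Latin square using all four symbols of $B$ there, and conversely. Second, the template has repeated rows ($4\equiv5$, $8\equiv9$) and repeated columns ($4\equiv5$, $8\equiv9$). For the two blocks whose rows and columns are both repeated pairs (rows/columns $\{4,5\}$ and $\{8,9\}$), this forces the block to sit inside $2\times4$ and $4\times2$ Latin rectangles on $A$, and symmetrically on $B$: for instance columns $4$ and $5$ carry $A$-symbols in exactly rows $\{2,3,4,5\}$, so $L_0$ on rows $\{2,3,4,5\}\times\{4,5\}$ is a $4\times2$ Latin rectangle whose bottom half is the block and whose top half lies in the $A\neg B$ collar $Q_2$. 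These rectangles tie the block entries rigidly to the neighbouring type-${*}{*}10$ and ${*}{*}01$ cells described in \lref{lem:4444template}.

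Combining the levers with the global count (four occurrences of each symbol among the $16$ cells), I would fix symbols at the four diagonal points and in one block up to the isomorphism operations, and then propagate: each forced choice in a block determines, through its collar Latin rectangles, symbols in $Q_2$, $Q_3$ and ultimately in the third block and the diagonal, where the rainbow and four-occurrences conditions must still hold. The goal is to show every branch either repeats a pair $(L_0,L_1)$ on two cells --- contradicting orthogonality --- or strands a cell with no admissible symbol. The main obstacle is exactly this case analysis: the constraints are individually local but couple the three blocks, the diagonal, and their collars, so the work lies in organising the propagation so that the branching stays bounded. I would keep it finite by always branching on the block-type dichotomy (intercalate versus full $2\times2$ square) before resolving individual symbols, and I would cross-check the hand analysis against a short exhaustive backtracking search over all symbol assignments to $L_0$ and $L_1$ that are consistent with $T$, which is the form in which the non-existence is most reliably confirmed.
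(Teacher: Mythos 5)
Your setup is sound --- the $16$ doubly-relational cells (the diagonal of $Q_1$ plus the three $2\times2$ blocks of $Q_4$) do carry all $16$ pairs of $A\times B$ exactly once, and the collar observations about columns $4,5$ meeting $A$-symbols exactly in rows $\{2,3,4,5\}$ are correct. But the proposal never actually derives a contradiction: everything after the two ``levers'' is a description of a branching scheme you \emph{would} run, with the admission that the coupling between the blocks, the diagonal and the collars is the real work, and with the final fallback that non-existence ``is most reliably confirmed'' by an exhaustive backtracking search. A plan for a case analysis, plus an appeal to an unexecuted computation, is not a proof of the lemma; as it stands there is no point in your argument where a cell is actually stranded or a pair $(L_0,L_1)$ actually repeated.

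The gap is all the more visible because the obstruction does not require the two-square orthogonality machinery you build at all: it lives inside the single parallel class $\Pi_2$. Following the paper's route, take the relational line $\ell\in\Pi_2$ through $(0,0)$. By \lref{lem:4444template}(iii) it meets no other point of $Q_1$, so it must meet column $1$ in one of rows $4$--$6$; hence two of rows $4$--$6$ are covered by $\ell$ inside $Q_4$. On the other hand, rows $2$ and $3$ force two of columns $4$--$6$ to be covered in $Q_2$, so only one point of $\ell$ in $Q_4$ can lie in columns $4$--$6$. Since the only $F_0$-relational cell of $Q_4$ in rows $4$--$6$ outside columns $4$--$6$ is $(6,7)$, the line $\ell$ is forced through $(6,7)$. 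The identical argument applied to the relational line $\ell'\in\Pi_2$ through $(1,1)$ forces $\ell'$ through $(6,7)$ as well, and $\ell\neq\ell'$, contradicting that distinct lines of one parallel class are disjoint. This is a complete, few-line argument with no branching; if you want to salvage your approach, you would need to actually carry out (and write down) the propagation to a contradiction, which the block-coupling you identified makes considerably heavier than the single-class argument above.
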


\begin{proof}
	Suppose for the sake of contradiction that there exists a \net{4}{10}
	$N$ that is a refinement of $T$. Let $\ell \in \Pi_2$ be a relational
	line in $N$ incident to the point $(0,0)$. By
	\Cref{lem:4444template}(iii), $\ell$ is incident to no other point in
	$Q_1$.  It follows that $\ell$ is incident with one point in rows
	$4$--$6$ of column $1$. This implies that $\ell$ is incident with two
	points in rows $4$--$6$ in $Q_4$.  Similarly, $\ell$ is incident with
	two points in columns $4$--$6$ in $Q_2$ and thus columns $4$--$6$
	contain one point incident to $\ell$ in $Q_4$. Thus point $(6,7)$ is
	incident to $\ell$.

	Now let $\ell' \in \Pi_2$ be a relational line such that $\ell$ is
	incident to the point $(1,1)$. By the same logic used above, $\ell'$ is
	incident to point $(6,7)$. This is a contradiction as both $\ell$ and
	$\ell'$ are in the same parallel class and $\ell \not = \ell'$. It
	follows that there does not exist a net $N$ that is a refinement of
	$T$.
\end{proof}

The key point, guaranteed by \lref{lem:4444template}, is that any \net{4}{10}
that satisfies a relation of type $4^4$ is isomorphic to a net that is a
refinement of one of our $6\,965$ templates. The computations that we report in
\sref{section:nets4444} will rely on this fact.

\subsection{Templates for Odd Relations of Type $4^22^3$}

In this subsection we consider templates for odd relations of type $4^22^3$.  A
significant difference between our task here and the one in the previous
subsection is that the parallel classes now do not all have the same weight.
Therefore, the ordering of the parallel classes makes a material difference. In
a template the rows and columns play a different role to the other parallel
classes as represented by the frequency squares. Hence, we need to consider
three equivalent types, namely $2^34^2$, $242^24$ and $4^22^3$.  A simple
recursive backtracking algorithm, with isomorphism rejection, can be used to
find the set of templates of each type. The sets of templates generated for
$2^34^2$ and $242^24$ were prohibitively large, with the enumeration of
templates of type $2^34^2$ not completing after producing billions of
templates.  However, we found that there are only $30$ isomorphism classes of
odd templates of type $4^22^3$. See \Cref{fig:44222template} for one example.

  \begin{figure}[!ht]
	\begin{center}
		\begin{tikzpicture}
			\begin{scope}
				\matrix (A) [square freq small] {
		111 & 100 & 010 & 001 & 000 & 000 & 000 & 000 & 000 & 000 \\
		100 & 111 & 001 & 010 & 000 & 000 & 000 & 000 & 000 & 000 \\
		010 & 001 & 111 & 100 & 000 & 000 & 000 & 000 & 000 & 000 \\
		001 & 010 & 100 & 111 & 000 & 000 & 000 & 000 & 000 & 000 \\
		000 & 000 & 000 & 000 & 100 & 100 & 010 & 010 & 001 & 001 \\
		000 & 000 & 000 & 000 & 100 & 100 & 010 & 001 & 010 & 001 \\
		000 & 000 & 000 & 000 & 010 & 010 & 100 & 001 & 001 & 100 \\
		000 & 000 & 000 & 000 & 010 & 001 & 001 & 100 & 100 & 010 \\
		000 & 000 & 000 & 000 & 001 & 010 & 001 & 100 & 100 & 010 \\
		000 & 000 & 000 & 000 & 001 & 001 & 100 & 010 & 010 & 100 \\
				};
				\draw[ultra thick, red] (A-1-1.north
				west)
				rectangle (A-10-4.south east);
				\draw[ultra thick, red] (A-1-1.north
				west)
				rectangle (A-4-10.south east);
				\draw[ultra thick] (A-1-1.north west)
				rectangle
				(A-10-10.south east);
			\end{scope}
		\end{tikzpicture}
	\caption{
	  \label{fig:44222template}A template of order 10, type
          $4^22^3$ corresponding to $\dagger$ in
	  \Cref{table:templatecompletions}.}
	\end{center}
\end{figure}

The following lemma describes the structure of a template $T$ of type $4^22^3$,
and assisted in the generation of the 30 templates.  The equation
\eref{e:5neteq} for the number of points with a given type in a relation can be
converted into an equation for the number of points of a given type in an odd
relation by flipping the rightmost bit in the point types and exchanging all
instances of $g_5$ with $-g_5$. Doing so, we find that $t_{10000}=t_{01000} =
24$, $t_{00100} = t_{00010} = t_{00001} = 12$, and $t_{11100}= t_{11010} =
t_{11001} = t_{11111} = 4$ for an odd relation of type $4^22^3$. Using similar
notation as in the previous subsection we then have:

\begin{lemma}\label{lem:44222template}Consider a template of order $10$,
  type $4^22^3$.
  \begin{enumerate}[label={\rm(\roman*)}]
  \item The points in quadrants $Q_2$ and $Q_3$ are of type
    ${*}{*}000$.
  \item Every point in $Q_1$ is of type $11001$, $11010$, $11100$, or
    $11111$ with each of these types contained in each row and column once.
  \item Every point in $Q_4$ is of type $00001$, $00010$, or $00100$
    with each of these types appearing twice in each row and column.
  \end{enumerate}
\end{lemma}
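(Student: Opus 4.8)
The plan is to prove each of the three parts by combining the point-count values just computed from \eref{e:5neteq} with the quadrant structure forced by the relational rows and columns. Recall that for an odd relation of type $4^22^3$ we have $\lambda_0=\lambda_1=4$ (the relational rows and columns), so rows $0$--$3$ and columns $0$--$3$ are relational; the quadrants $Q_1,\dots,Q_4$ are the $4\times4$, $4\times6$, $6\times4$ and $6\times6$ subarrays they induce. The computed counts $t_{10000}=t_{01000}=24$, $t_{00100}=t_{00010}=t_{00001}=12$, and $t_{11100}=t_{11010}=t_{11001}=t_{11111}=4$ are the only point types that occur with positive multiplicity, so the first move is simply to observe which types are geometrically available in each quadrant.

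For part (i), a point in $Q_2$ lies in a relational row and a non-relational column, so its type begins $10$; a point in $Q_3$ begins $01$. The only available positive-count types beginning $10$ or $01$ are $10000$ and $01000$, both of which have the last three bits zero, giving type ${*}{*}000$. For part (ii), every point in $Q_1$ lies in a relational row and a relational column, so its type begins $11$; the only positive-count types beginning $11$ are $11001,11010,11100,11111$, establishing the first assertion. The once-per-row-and-column claim then follows by counting: there are exactly $4$ points of each such type (since each $t=4$) distributed over the $4$ relational rows and $4$ relational columns of $Q_1$, and an orthogonality/parity argument on relational lines (mirroring the weight-counting used in the proof of \lref{lem:4444template}) forces exactly one occurrence of each type per relational line. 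For part (iii), a point in $Q_4$ has type beginning $00$, and the only positive-count types beginning $00$ are $00001,00010,00100$; the twice-per-row-and-column distribution follows from the frequency-square constraint (each $F_{t-2}$ has frequency $\lambda_t=2$ in each non-relational row and column) together with the total counts.

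The cleanest way to nail down the distributional claims in (ii) and (iii) is a weight argument analogous to \lref{lem:4444template}. For the odd relation, a relational line in a weight-$4$ parallel class meets points whose weights sum appropriately; examining a relational row (in $\Pi_0$), its ten points split as four in $Q_1$ and six in $Q_2$, and since the $Q_2$ points are all of type $10000$ (weight $1$) the four $Q_1$ points must carry the remaining weight, forcing one point of each of the four $11{*}{*}{*}$ types per row; the same reasoning transposed handles columns. For $Q_4$, each non-relational row and column carries exactly two relational points relative to each of the three weight-$2$ frequency squares (by property~\ref{def:temppropfreq} of a template, $F_{t-2}$ has frequency $\lambda_t=2$), and since no point of $Q_4$ is relational with respect to two different squares simultaneously (which would force a type with two ones among the last three bits, none of which has positive count), these six relational incidences are realised by the three types $00001,00010,00100$ appearing twice each.

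The main obstacle I expect is establishing the per-line uniqueness/multiplicity statements rigorously rather than merely the list of admissible types: the raw counts only tell us the global totals $t_{\bvec{b}}$, and deducing that each relational line of $Q_1$ sees each of the four types exactly once (and each $Q_4$ line sees each of the three types exactly twice) requires the orthogonality structure of the net, not just the counting formula \eref{e:5neteq}. I would handle this by arguing that the distribution is forced uniquely: with only $4$ points of each $11{*}{*}{*}$ type spread across $4$ rows and $4$ columns, any row carrying two points of one type would leave another row with none, contradicting the weight balance on that second row; the analogous double-counting argument, together with the frequency-square column constraints, pins down the $Q_4$ distribution. This is the step where one must be careful to invoke the template's defining properties (especially the frequency constraint of property~\ref{def:temppropfreq}) rather than relying solely on the point-type totals.
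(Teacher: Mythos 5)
Your overall strategy---read off the admissible types in each quadrant from the point-type counts, then force the distributional claims with weight and frequency-square arguments---is essentially the paper's own. Your parts (i) and (iii) are sound and match the paper's reasoning; for (iii) in particular, you correctly combine the two-ones-per-row property of each $F_i$ (property~\ref{def:temppropfreq}) with the observation that no positive-count type is relational to two squares, which (together with part (i) placing all such ones inside $Q_4$) is exactly what is needed.

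There is, however, a genuine gap in your part (ii). The weight computation alone does not force ``one point of each of the four $11{*}{*}{*}$ types per row''. Summing weights across a row of $Q_1$ gives $4+4+3\times2=14$, and since every point of $Q_1$ has odd weight lying in $\{3,5\}$ (the parity constant is $\chi=7$), this forces exactly one point of type $11111$ and three points of weight $3$ --- but nothing so far distinguishes the three weight-$3$ types from one another. A row consisting of points of types $11111,11100,11100,11010$ has total weight $14$ and correct parities, yet violates the lemma. Your fallback in the last paragraph (``any row carrying two points of one type would leave another row with none, contradicting the weight balance on that second row'') fails for the same reason: a row containing $11111$ and three weight-$3$ points drawn from only two of the three types still sums to $14$, so no weight contradiction arises; indeed one can place the $11111$'s as a permutation matrix and distribute the weight-$3$ types so that every row and column weight sum \emph{and} the global counts $t_{\bvec{b}}=4$ are satisfied while the lemma fails. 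The missing step is the very ingredient you invoke for $Q_4$ but not for $Q_1$: each $F_i$ has exactly two ones in every row and column, and by part (i) none of these can lie in $Q_2$ or $Q_3$, so each row and column of $Q_1$ contains exactly two points relational to each of $\Pi_2,\Pi_3,\Pi_4$. The $11111$ point accounts for one of the two in every square, and each weight-$3$ point is relational to exactly one square, so the three weight-$3$ points must have pairwise distinct types. This is precisely the clause ``the four points must contain two which are relational for $\Pi_i$'' in the paper's proof; your write-up gestures at it (``orthogonality/parity argument'') but the arguments you actually spell out replace it with an insufficient weight-balance claim.
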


\begin{proof}
  Part (i) follows immediately from the counts of each point type.
  
  Next, consider the four points within a row or column of $Q_1$. These
  four points have total weight $4+4+3\times2=14$. Hence they must
  consist of one point of weight 5 and three points of weight 3. Also,
  the four points must contain two which are relational for $\Pi_i$,
  for $2\le i\le 4$. Part (ii) follows.

  Part (iii) follows from the requirement for each non-relational row
  or column to meet each relational line in $\Pi_2$, $\Pi_3$ and $\Pi_4$.
\end{proof}

\Cref{table:templatecompletions} lists all $30$ templates of type $4^22^3$.
Each template can be obtained by replacing quadrant $Q_4$ of
\Cref{fig:44222template} with the $6\times 6$ subarray encoded in the
appropriate column of \Cref{table:templatecompletions}. To decode, interpret
each six digit number in the table as one row of the subarray and replace $1$
with $100$, $2$ with $010$ and $3$ with $001$.  Other features of
\Cref{table:templatecompletions} will be explained when we consider it further,
in \Cref{s:44222sym}.

\begin{table}[htb]
    \centering
        \begin{tabular}{cr@{\hspace*{25pt}}lr}
            \toprule
            \multicolumn{1}{l}{Symmetry} & \multicolumn{1}{l}{} & \multicolumn{1}{l}{}             \\
            \multicolumn{1}{l}{type} & $|\aut{T}|$ & \multicolumn{1}{c}{Encoding of $Q_4$ \ } & \multicolumn{1}{c}{$|\Omega'(T)|$} \\
            \midrule
            $S_3$ 	& 24 		& 112233 121323 223131 233112 312321 331212 			& 1\,392 	\\
            $S_3$ 	& 48 		& 112233 112323 231132 231312 323121 323211  			& 5\,967 	\\
            $S_3$ 	& 96 		& 112233 121323 213312 233121 321231 332112 			& 361 		\\
            $S_3$ 	& 96 		& 112233 112233 223311 231312 323121 331122 			& 2\,932 	\\
            $S_3$ 	& 144 		& 112233 121323 223311 232131 313212 331122  			& 262 		\\
            $S_3$ 	& 192 		& 112233 112233 231312 233121 321321 323112 			& 1\,513 	\\
            $S_3$ 	& 288 		& 112233 122313 223311 233121 311232 331122  			& 143	 	\\
            $S_3$ 	& 1\,152 	& 112233 112233 231312 231312 323121 323121  			& 1\,887 	\\
            $S_3$ 	& 9\,216 	& 112233 112233 223311 223311 331122 331122  			& 342 		\\
            $C_2$ 	& 8 		& 112233 232113 311232 321321 233121 123312  			& 2\,943	\\
            $C_2$ 	& 8 		& 112233 112323 231132 233211 321321 323112  			& 5\,924 	\\
            $C_2$ 	& 8 		& 112233 112323 223131 231312 323211 331122  			& 6\,027	\\
            $C_2$ 	& 8 		& 112233 132213 221331 231312 323121 313122  			& 11\,654	\\
            $C_2$ 	& 16 		& 112233 232113 311232 321312 233121 123321  			& 1\,460	\\
            $C_2$ 	& 16 		& 112233 113322 231123 232311 321231 323112  			& 2\,863 	\\
            $C_2$ 	& 16 		& 112233 112323 231132 233121 321312 323211  			& 2\,971 	\\
            $C_2$ 	& 16 		& 112233 112323 231132 233211 321312 323121  			& 3\,037 	\\
            $C_2$ 	& 32 		& 112233 112323 231231 233112 321321 323112  			& 2\,842 	\\
            $C_2$ 	& 32 		& 112233 121323 212331 233211 323112 331122  			& 2\,986	\\
            $C_2$ 	& 48 		& 112233 112323 223131 231312 323121 331212  			& 1\,961 	\\
            $C_2$ 	& 64 		& 112233 113322 231213 232131 321321 323112  			& 2\,821	\\
            $C_2$ 	& 64 		& 112233 112233 231312 231321 323112 323121$^*$ 		& 6\,018 	\\
            $C_2$ 	& 96 		& 112233 232113 231231 321312 313122 123321  			& 926		\\
            $C_2$ 	& 128 		& 112233 112323 221331 233112 323112 331221$^\dagger$  	& 3\,048	\\
            $C_2$ 	& 192 		& 112233 113322 231213 233121 321312 322131  			& 507		\\
            $C_2$ 	& 192 		& 112233 113322 231213 232131 321312 323121  			& 1\,898	\\
            -- 		& 8 		& 112233 121323 213132 231321 323211 332112  			& 2\,803	\\
            -- 		& 8 		& 112233 121323 212331 233112 323211 331122  			& 5\,649	\\
            -- 		& 8 		& 112233 112323 221331 233112 323121 331212  			& 11\,822	\\
            -- 		& 16 		& 112233 113322 231123 231231 322311 323112  			& 5\,867	\\
                                  
            \end{tabular}

    \caption{\label{table:templatecompletions}The $30$ odd
          templates of type $4^22^3$.}
\end{table}

\section{Refinements of Templates}
\label{s:refining}

In this section we will discuss the computational techniques used to find all
possible refinements of a template $T$ to a net $N$. We will first formalise
the notion of a partial \net{k}{n}, which allows for a stepwise refinement by
adding one line at a time.

A \emph{partial $k$-net} $\PP = (P,L)$ of order $n$ is an incidence structure
consisting of a set $P$ of $n^2$ points and a set $L$ of at most $kn$ lines
satisfying the following conditions.

\begin{enumerate}
	\item Every line is incident to $n$ points.
	\item Every point is incident to at most $k$ lines.
	\item Lines are either parallel or orthogonal.
	\item The set of lines partition into $k$ (possibly empty) sets of parallel
	lines. 
\end{enumerate}

It is easy to see that partial nets generalise nets as each set of parallel
lines contains at most $n$ lines. For a given partial net $\PP = (P,L)$, a set
of parallel lines $\Pi \subseteq L$ is a partial parallel class if $|\Pi| < n$,
otherwise the set is a parallel class. Let $\LL^\Pi$ denote the set of lines in
$\binom{P}{n}$ not contained in $\Pi$ that are both orthogonal to all lines in
$L\setminus\Pi$ and parallel to every line in $\Pi$. For any line $\ell \in
\LL^\Pi$ we can construct a new partial net $\PP' = (P, L\cup\{\ell\})$. Any
partial net $\PP' = (P, L')$ with $L \subset L'$ is an \emph{extension} of
$\PP$.

As a partial net can have empty partial parallel classes, a partial $k$-net is
also a partial $(k+1)$-net, where the last partial parallel class is empty. A
partial net is trivial if the partial net has exactly two parallel classes and
every partial parallel class is empty. Up to isomorphism, the $n\times n$ grid
is the only trivial partial net of order $n$. For $k\geq3$, every \net{k}{n} is
an extension of the trivial partial $k$-net of order $n$.

We now describe a simple exhaustive backtracking algorithm for generating all
possible $k$-nets of order $n$ using lines from within some predetermined set
of lines.  Starting with the trivial partial \net{k}{n} $N$, for each partial
parallel class $\Pi_i$ we first determine a set $\LL_{0,i} \subseteq
\LL^{\Pi_i}$.  Elements of $\bigcup_j\LL_{0,j}$ will be referred to as
\emph{candidate lines}. We will say more about the choice of candidate lines
below.  Let $\LL_0=(\LL_{0,2},\dots,\LL_{0,k-1})$.  Let $S$ contain a list of
indices of the partial parallel classes.  Each index should be repeated in $S$
as many times as the number of lines we wish to add to the partial parallel
class of that index.  The list will be used to direct the search and will be
referred to as our \emph{search strategy}.  The ordering of the elements within
the search strategy has algorithmic implications that will be discussed below.

\Cref{alg:backtracking} proceeds via recursion. At each stage $i$, we have a
partial net $\PP_i = (P, L_i)$ and a tuple $\LL_i =
(\LL_{i,2},\dots,\LL_{i,k-1})$ of sets of lines available for forming
extensions of $\PP_i$.  In stage $i$ we select a new line from $\LL_{i,S[i]}$
and add it to the partial parallel class $\Pi_{S[i]}$.  For example, if $S =
[2,3,2,3,2,3,\dots,2,3]$ then we would seek a \net{4}{n} by alternately
extending the two partial parallel classes $\Pi_2$ and $\Pi_3$.  We are only
interested in partial nets that might extend to a \net{k}{n}, and in such a net
every point must be incident with a line in $\Pi_{S[i]}$. So, our recursive
step need only consider the set of lines that are incident to a single point
$p$ that is not yet contained in a line of the partial parallel class
$\Pi_{S[i]}$. To limit the branching factor of our recursive step we choose $p$
such that the number of lines incident to $p$ in $\LL_{i,S[i]}$ is a minimum.
The same heuristic was used by \citet{mathon_searching_1997} and
\citet{best_thesis_2018}, and easily justifies the time required to identify
$p$ in terms of the speed-up it achieves.

Let $j=S[i]$.  If $p$ is not contained in any line of $\LL_{i,j}$, then the
point $p$ cannot be covered by any line and thus the partial parallel class
$\Pi_j$ cannot be completed. Otherwise, for each line $\ell \in \LL_{i,j}$ that
is incident to $p$, we obtain a partial net $\PP_{i+1} = (P, L_i \cup
\{\ell\})$.  Then, we obtain the set $\LL_{i+1,j} \subset \LL_{i,j}$ by taking
every line in $\LL_{i,j}$ that is disjoint from $\ell$. Also, for every partial
parallel class $\Pi_m$ with $m \not = j$, we obtain the set $\LL_{i+1,m}
\subseteq \LL_{i,m}$ by taking every line in $\LL_{i,m}$ that is orthogonal to
$\ell$. This gives the tuple $\LL_{i+1} = (\LL_{i+1,2}, \dots, \LL_{i+1,k-1})$.
A recursive call on $\PP_{i+1}$ and $\LL_{i+1}$ is then used to determine if
further extensions of the partial net $\PP_{i+1}$ exist. If every element of
$S$ has been exhausted then we have reached the desired partial net and the
result is recorded.

\begin{algorithm}[h]
  \caption{\label{alg:backtracking}Exhaustively search for partial nets within a given family of lines.} 
  \begin{algorithmic}
    \Function{Add\_Line}{$i$, $\PP_i = (P, L_i)$, $\LL_i$, $S$}
    \If{$i \geq |S|$}
    \State Record $\PP$
    \Else
    \State Let $\Pi$ be the partial parallel class indexed by $S[i]$
    \State Choose $p\in P$ not incident to
    any line in $\Pi$ and incident to the fewest lines in $\LL_{i,S[i]}$
    \For{$\ell \in \LL_{i,S[i]}$ incident to $p$}
    \State Let $\LL_{i+1,S[i]}$ be the set of lines in $\LL_{i,S[i]}$
    that are disjoint from $\ell$
    \ForAll{$j \not = S[i]$}
    \State Let $\LL_{i+1,j}$ be the set of lines in
    $\LL_{i,j}$ that are orthogonal to $\ell$
    \EndFor
    \State Let $\LL_{i+1} = (\LL_{i+1,2},
    \dots, \LL_{i+1,k-1})$
    \State \Call{Add\_Line}{$i+1$, $(P, L_i\cup \{\ell\}),
      \LL_{i+1}$, $S$}
    \EndFor
    \EndIf
    \EndFunction
  \end{algorithmic}
\end{algorithm}

The algorithm as described thus far is quite general. For our specific purposes
we will want to restrict the class of partial nets it generates by choosing
candidate lines that respect a given template $T$ in a sense we now describe.
If a \net{k}{n} $N$ refines $T$, then for each frequency square $F$ in $T$ the
corresponding lines in $N$ hit points that have a constant value in $F$.  Thus,
for each given parallel class $\Pi_i$ and its assigned frequency square
$F_{i-2}$, we restrict $\LL_{0,i}$ to be the set of lines in $\LL^{\Pi_i}$ that
contain only relational points or contain only non-relational points, relative
to $F_{i-2}$. It follows that any net resulting from running
\Cref{alg:backtracking} with these candidate lines must satisfy the relation
indicated by the template $T$. Similarly, if no net is returned from
\Cref{alg:backtracking} then there does not exist any net $N$ that refines $T$.

\subsection{Symmetry Breaking}
\label{section:sym}

\Cref{alg:backtracking} is sufficient for enumerating all \mols{2}{10} that
satisfy a relation, but it is too slow for dealing with relations on
\mols{3}{10}.  In this section we outline an improved algorithm, which takes
advantage of symmetries of a template $T$ to reduce the computation time.  Let
$\aut{T}$ be the group of all automorphisms of $T$ induced by the template
isomorphism relation. Suppose $\LL_0 = (\LL_{0,2}, \dots,\LL_{0,k-1})$ and that
the candidate lines in $\LL = \bigcup_i\LL_{0,i}$ have been selected with
respect to $T$.  Consider the induced action of $\aut{T}$ on $\LL$.  For a line
$\ell \in\LL_{0,2}$ we let $\OO_{\ell}=\{\ell'\in \LL: \phi(\ell) = \ell'
\text{ for some } \phi \in \aut{T}\}$ denote the orbit of $\ell$ under this
action. As template isomorphism allows for the reordering of frequency squares,
the orbit $\OO_\ell$ may or may not contain lines from $\LL_{0,j}$ where $j>2$.
Let $\ell^*$ denote the lexicographically least element of
$\LL_{0,2}\cap\OO_{\ell}$, which we call the \emph{orbit leader}.  Suppose that
a net $N$ is a refinement of $T$.  By definition, if $N$ includes $\ell$ then
$N$ is isomorphic to a refinement of $T$ that includes $\ell^*$.  We therefore
lose no generality in assuming that $N$ includes $\ell^*$.  Moreover, we can
impose an arbitrary ordering on $\OO=\{\OO_\ell:\text{non-relational
}\ell\in\LL_{0,2}\}$ and then assume that $\ell$ was chosen from the first
orbit in $\OO$ that contains any non-relational line in $N$.
\Cref{alg:symmetrybroken} enumerates partial nets that might extend to a net
$N$ satisfying these additional assumptions. In particular, for each orbit in
$\OO_\ell\in\OO$ we do one computation using \Cref{alg:backtracking} where we
first discard all orbits that are earlier in the ordering of $\OO$ and also
assume that our partial net includes the orbit leader $\ell^*$ from $\OO_\ell$.
This produces a dramatic speed-up compared to using \Cref{alg:backtracking} on
its own, especially for orbits which occur late in the ordering of $\OO$.  The
search strategy as specified by the list $S$, is chosen from a set of search
strategies $\MS$ before invoking \Cref{alg:backtracking}. This choice, which
affects efficiency but not correctness, will be described in more detail in
\Cref{s:44222}.  For the time being, we merely remark that there are reasons to
choose different strategies at different stages of the computation.

\begin{algorithm}[h]
  \caption{\label{alg:symmetrybroken}Symmetry breaking} 
  \begin{algorithmic}
    \Function{Sym\_Break}{$\PP = (P, L)$, $\LL_0$, $\OO$, $\MS$}
    \ForAll{$\OO_i \in \OO$}
      \State Let $\ell^*$ be the orbit leader of $\OO_i$
      \State Let $\PP_0 = (P, L \cup \{\ell^*\})$
      \For{$k \geq 2$}
      \State $\LL_{0,k}' = \LL_{0,k} \setminus{\bigcup_{j<i}} \OO_j$
      \EndFor
      \State Let $\LL_{0,2}^*$ be the set of lines in $\LL_{0,2}'$
      that are disjoint from $\ell^*$
      \ForAll{$k > 2$}
      \State Let $\LL_{0,k}^*$ be the set of lines in
      $\LL_{0,k}'$ that are orthogonal to $\ell^*$
      \EndFor
    \State $\LL_0^* = (\LL_{0,2}^*,\dots,\LL_{0,k-1}^*)$
    \State Select appropriate $S\in \MS$
    \State \Call{Add\_Line}{$0$, $\PP_0$, $\LL_0^*$, $S$}
    \EndFor
    \EndFunction
  \end{algorithmic}
\end{algorithm}

\section{Nets with Relations of Type $4^4$}\label{section:nets4444}

By construction, any \net{4}{10} that satisfies a relation of type $4^4$ must
be isomorphic to one that is a refinement of one of the $6\,965$ templates
constructed in \sref{ss:temp4444}.

For each template $T$, the set of candidate lines was computed using a simple
recursive backtracking algorithm. This computation produced approximately
$14\,000$ lines for each of the two frequency squares in $T$.
\Cref{alg:backtracking} was then used to search for extensions of the trivial
partial \net{4}{10}. Finally, the output of \Cref{alg:backtracking} was
screened for isomorphs.  In this way we obtained a set $\Omega$ of species
representatives of all \mols{2}{10} satisfying a relation of type $4^4$. The
search strategy $S$ was chosen such that \Cref{alg:backtracking} alternated
between the partial parallel classes in the partial net. This choice of $S$
proved much faster than attempting to complete the partial parallel classes one
at a time. All templates that had a refinement into some net had refinements to
at least $31$ non-isomorphic nets and at most $46\,161$ non-isomorphic nets.
Of the 6965 templates, only ten had no refinement to a net, including the
template discussed in \Cref{lem:nonexistence}. By enumerating $\Omega$, we
found:

\begin{lemma}
	There are $18\,526\,320$ isomorphism classes of $4$-nets of
        order $10$ satisfying at least one non-trivial relation.
\end{lemma}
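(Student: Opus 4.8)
The plan is to split the argument into a structural reduction followed by the enumeration. The key claim is that a $4$-net of order $10$ admits some non-trivial relation if and only if it admits one of type $4^4$. Granting this, \Cref{lem:4444template} guarantees that every such net is isomorphic to a refinement of one of the $6\,965$ templates of \sref{ss:temp4444}, and the count then follows by running \Cref{alg:backtracking} over those templates and rejecting isomorphs. So the mathematical content lies in the reduction, and the rest is computational bookkeeping.

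To establish the reduction I would first rule out relations supported on few parallel classes. A relation supported on at most two parallel classes is trivial: within a single parallel class the corresponding columns of the incidence matrix have disjoint support, so the only $\FField{2}$-dependence among the lines of $\Pi_i\cup\Pi_j$ is their full union. No relation can be supported on exactly three parallel classes either. Indeed, along any relational line $\ell$ of one of those three classes, each of the ten points of $\ell$ already has odd weight from $\ell$, so exactly one of the two lines through it in the other two relational classes must be relational; counting the points of $\ell$ met by relational lines of each of those classes forces $\lambda_a+\lambda_b=10$ for each pair among the three classes, whence $\lambda_0=\lambda_1=\lambda_2=5$, contradicting that every $\lambda_i$ is even. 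Hence any non-trivial relation involves all four parallel classes.

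It remains to pin down the type of an all-four-class relation $(\lambda_0,\lambda_1,\lambda_2,\lambda_3)$ with each $\lambda_i$ even and $0<\lambda_i<10$. Complementing the relational lines of a pair of parallel classes is the same as taking the symmetric difference with a trivial relation, so up to reordering I may reduce to a type with all coordinates in $\{2,4\}$ or with a single coordinate exceeding $5$. A short case check using \eref{e:4neteq} shows that every such type other than $4^4$ forces a negative or non-integer value of some $t_{\bvec b}$ (for instance $2^4$, $2^34$, $2^24^2$ and $24^3$ are excluded immediately from $t_{1111}$); the only type surviving \eref{e:4neteq} is $24^26$, which is eliminated by the additional argument of \citet{dukes_group_2014}. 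Thus the only admissible type is $4^4$. Finally, because the symmetric difference of a relation of a pair-complemented type with the appropriate trivial relation is again a relation on the \emph{same} net, a net carrying any non-trivial relation necessarily carries one of type $4^4$, completing the reduction.

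With the reduction in hand, the count is computational. For each of the $6\,965$ templates I would compute its candidate lines (those lines taking a constant value on each frequency square) and run \Cref{alg:backtracking}, alternating between the two partial parallel classes $\Pi_2$ and $\Pi_3$, to generate every refinement of the template to a net; collecting these outputs over all templates and rejecting isomorphs yields the set $\Omega$ of representatives, whose cardinality is the claimed $18\,526\,320$. The principal obstacle is the scale of the search together with the need to certify both its exhaustiveness and the correctness of the isomorphism testing used to form $\Omega$; this reliability concern is met by having the two authors carry out the entire computation independently and cross-check the resulting catalogues.
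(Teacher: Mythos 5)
Your overall architecture is the same as the paper's: reduce to relations of type $4^4$, then invoke the template catalogue of \sref{ss:temp4444} and the exhaustive, cross-checked computation. The difference is that the paper obtains the reduction by citing \citet{dukes_group_2014}, who classified and eliminated the possible four-class types other than $4^4$, whereas you attempt to re-derive that classification yourself, and this is where there is a genuine gap. You assume throughout that every $\lambda_i$ is even, but this is never proved or cited. Evenness is built into the \emph{definition} of a template, but for relations it is a theorem, not a convention, and you use it twice: once to dispose of the three-class case (you correctly derive $\lambda_0=\lambda_1=\lambda_2=5$ and then declare this to ``contradict that every $\lambda_i$ is even'' --- which is circular, since evenness is exactly what has not been established), and once when you restrict the four-class analysis to types with entries in $\{2,4,6,8\}$, so that all-odd types such as $(5,5,5,5)$ or $(3,5,5,7)$ are silently omitted. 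The three-class case cannot be waved away: relations of type $(5,5,5,0)$ genuinely exist on $3$-nets of order $10$ (take rows, columns and five symbol classes of a Latin square that splits into four order-$5$ subsquares, e.g.\ the Cayley table of $\mathbb{Z}_2\times\mathbb{Z}_5$), and this type also satisfies \eref{e:4neteq} with nonnegative integer values ($t_{\bvec{b}}\in\{0,25\}$), so your own counting tool cannot eliminate it. The contradiction must come from the fourth parallel class, which your argument never invokes.

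The fix is short. For any class $i$, sum the weights of the ten points on a line $\ell\in\Pi_i$: each relational line of every other class meets $\ell$ exactly once, and $\ell$ itself contributes $10$ or $0$, so $\sum_{j\ne i}\lambda_j\equiv 0\pmod 2$ because all point weights are even and $n=10$ is even. Applying this to all four classes of a \net{4}{10} forces every $\lambda_i$ to be even; in particular, a $(5,5,5,0)$ relation is impossible because any line of the empty class would meet $15$ relational lines while all of its points have even weight. (Alternatively, all-odd four-class types make every $g_i$ even, whence \eref{e:4neteq} gives non-integral $t_{\bvec{b}}$, but the three-class case really does need the parity argument.) With this lemma inserted --- or with the classification simply cited from \citet{dukes_group_2014}, as the paper does --- the remainder of your argument (the reduction modulo trivial relations, the case check via \eref{e:4neteq} leaving only $4^4$ and $24^26$, the appeal to Dukes and Howard for $24^26$, and the template-based enumeration) is correct and matches the paper.
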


The dimension of each pair of MOLS in $\Omega$ was computed. These results
confirmed the results of \citet{delisle_masters_2006} who, under the
supervision of Wendy Myrvold, computed the set of all \mols{2}{10} satisfying
at least two linearly independent non-trivial relations (dimension at most
$35$). The number of \mols{2}{10} with dimension at most 36 is listed in
\Cref{table:44dim}.

\begin{table}[h]
  \begin{center}
    \begin{tabular}{cr}
      \toprule
      Dimension & Species    \\
      \midrule
      34        & 6        \\
      35        & 85       \\
      36        & 18\,526\,229 \\
      \bottomrule
    \end{tabular}
    \caption{\label{table:44dim}The pairs of MOLS in $\Omega$ classified by
	dimension.}
  \end{center}
\end{table}

For each pair of MOLS in $\Omega$ we computed the set of common transversals.
No pair contained more than $5$ common disjoint transversals.
\Cref{table:44dim} shows the number of pairs of MOLS generated, partitioned
according to the number of disjoint common transversals.

\begin{table}[htb!]
	\centering
	
	\small{
		\begin{tabular}{ccr}
			\toprule
			\multicolumn{1}{l}{Disjoint}     & \multicolumn{1}{l}{}             & \multicolumn{1}{l}{}       \\
			\multicolumn{1}{l}{Transversals} & \multicolumn{1}{l}{Transversals} & \multicolumn{1}{c}{Species}  \\
			\midrule
			0                                & @0                                & 14\,389\,542                   \\
			1                                & @1                                & 3\,634\,655                    \\
			1                                & @2                                & 274\,766                     \\
			1                                & @3                                & 8\,601                       \\
			1                                & @4                                & 164                        \\
			1                                & @5                                & 3                          \\
			1                                & @6                                & 6                          \\
			1                                & @7                                & 8                          \\
			1                                & @8                                & 12                         \\
			1                                & @9                                & 1                          \\
			1                                & 10                               & 8                          \\
			1                                & 11                               & 1                          \\
			1                                & 12                               & 5                          \\
			1                                & 13                               & 1                          \\
			1                                & 16                               & 1                          \\
			2                                & @2                                & 185\,303                     \\
			2                                & @3                                & 28\,395                      \\
			2                                & @4                                & 1\,845                       \\
			2                                & @5                                & 73                         \\
			2                                & @6                                & 11                         \\
			2                                & @7                                & 5                          \\
			2                                & @8                                & 6                          \\
			2                                & @9                                & 2                          \\
			3                                & @3                                & 2\,339                       \\
			3                                & @5                                & 45                         \\
			3                                & @4                                & 466                        \\
			3                                & @6                                & 15                         \\
			3                                & @7                                & 6                          \\
			3                                & @8                                & 6                          \\
			3                                & 10                               & 1                          \\
			3                                & 12                               & 1                          \\
			3                                & 17                               & 1                          \\
			3                                & 18                               & 1                          \\
			3                                & 21                               & 4                          \\
			4                                & @4                                & 11                         \\
			4                                & @5                                & 3                          \\
			4                                & @7                                & 1                          \\
			4                                & @8                                & 2                          \\
			4                                & 10                               & 2                          \\
			5                                & @7                                & 1                          \\
			5                                & 19                               & 1                         
			\end{tabular}

	}

	\caption{\label{table:4444transversals}The MOLS
          in $\Omega$ classified by number of
          transversals and disjoint transversals.}
\end{table}

As no pair of MOLS in $\Omega$ was extendible to a triple, we have the
following theorem.

\begin{theorem}\label{thm:main_4444}
	Let $N$ be a \net{4}{10} that is a subnet of a
        \net{5}{10}. Then $N$ does not satisfy a non-trivial relation.
\end{theorem}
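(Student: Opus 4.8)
\section*{Proof proposal}

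The plan is to argue by contradiction. Suppose $N$ is a \net{4}{10} that embeds in a \net{5}{10} and yet carries some non-trivial relation $\rel$; the goal is to collide this hypothesis with the computational enumeration of $\Omega$. The first task is to pin down the type of $\rel$. Writing its type as $(\lambda_0,\lambda_1,\lambda_2,\lambda_3)$ with each $\lambda_i$ even, I would replace $\rel$ by its symmetric difference with the trivial relations on those parallel classes $\Pi_i$ for which $\lambda_i>5$. Such a symmetric difference is again a relation on the \emph{same} net $N$, and it complements the relational lines within each chosen $\Pi_i$, so this lets me assume $\lambda_i\in\{0,2,4\}$ for every $i$, while non-triviality guarantees that the $\lambda_i$ are not all zero.

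The heart of the argument is to show that the only type surviving this reduction is $4^4$. For each of the finitely many reduced types I would set $g_i=5-\lambda_i$ and evaluate $t_{1111}$ using \eref{e:4neteq}. Running through all multisets of $\{0,2,4\}$ of size four, every one except $(4,4,4,4)$ makes $t_{1111}$ either negative or non-integral, and hence makes the point counts impossible. This recovers, and uniformly extends, the eliminations of $2^4$, $2^36$, $2^24^2$ and $24^26$ attributed to \citet{dukes_group_2014}: for instance $24^26$ reduces to $24^3$, for which \eref{e:4neteq} returns the non-integer value $t_{1111}=1/2$. Consequently $\rel$, after reduction, has type $4^4$, so $N$ itself admits a relation of type $4^4$.

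With the type fixed I would invoke the catalogue: by the construction in \sref{ss:temp4444}, every \net{4}{10} carrying a relation of type $4^4$ is isomorphic to a refinement of one of the $6\,965$ templates, hence is equivalent to a member of $\Omega$, and extendability to a \net{5}{10} is invariant under this equivalence. Finally I would use the recorded fact that a \net{4}{n} extends to a \net{5}{n} exactly when it has $n$ pairwise disjoint common transversals. Since the exhaustive computation summarised in \Cref{table:4444transversals} shows that no member of $\Omega$ has more than five disjoint common transversals, $N$ cannot possess the ten disjoint transversals that embedding in a \net{5}{10} would demand, contradicting the hypothesis and proving the theorem.

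The genuinely hard step is the computation underpinning the last paragraph: the exhaustive generation of $\Omega$ via \Cref{alg:backtracking} and the verification that none of its $18\,526\,320$ members extends. By contrast the type classification is routine once the complementation reduction is in place, its only delicate point being completeness. In particular I must make sure that no non-trivial type escapes the reduction, including the degenerate types in which one or more $\lambda_i$ vanish (relations supported on a \net{3}{10} or \net{2}{10}); these are dispatched by the same evaluation of \eref{e:4neteq}, each again yielding $t_{1111}<0$, so no separate treatment of sub-nets is needed.
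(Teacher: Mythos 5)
Your proposal is correct and takes essentially the same route as the paper: reduce every non-trivial relation to type $4^4$ (the paper delegates this step to Dukes and Howard's eliminations, which you re-derive uniformly from \eref{e:4neteq}, including the non-integer value $t_{1111}=1/2$ for type $24^3$ and the degenerate types with some $\lambda_i=0$), then appeal to the catalogue of $6\,965$ templates and the exhaustive computation of $\Omega$. The only difference is cosmetic: you infer non-extendability from the bound of at most $5$ disjoint common transversals in \Cref{table:4444transversals} combined with the criterion that being a subnet of a \net{5}{10} requires $10$ disjoint transversals, whereas the paper also reports the direct computation that no pair in $\Omega$ extends to a triple; both computational facts appear in \Cref{section:nets4444}.
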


Inferring that we achieve equality in \eref{e:dimNbound} in this situation, we
have:

\begin{corollary}
  Let $N$ be a \net{4}{10} that is a subnet of a \net{5}{10}.
  Then $\dim(N) = 37$.
\end{corollary}

Two species in $\Omega$ have $5$ disjoint common transversals, see
\Cref{fig:4444with5trans1,fig:4444with5trans2}. \citet*{delisle_masters_2006}
found $4$ species of \mols{2}{10} with $3$ disjoint common transversals.
\citet*{brown_pair_1993} found a species of \mols{2}{10} with $4$ disjoint
common transversals. Only one species of \mols{2}{10} with more than 4 disjoint
common transversals was previously known, namely the pair with $7$ common
disjoint transversals found by \citet{egan_enumeration_2016}.

\begin{figure}
	\begin{center}
		\begin{tikzpicture}
			\begin{scope}
				\matrix (A) [square freq small] {
					0 & 1 &
					2 &
					|[fill=t1]|3 & |[fill=t2]|4 &
					|[fill=t3]|5 &
					|[fill=t4]|6 & 7 &
					|[fill=t5]|8 & 9 \\
					3 & |[fill=t3]|6 &
					|[fill=t1]|9 &
					7 & |[fill=t5]|0 &
					4 &
					1 & |[fill=t2]|5 &
					2 & |[fill=t4]|8 \\
					|[fill=t2]|1 & |[fill=t1]|2 &
					|[fill=t4]|5 &
					8 & 7 &
					9 &
					|[fill=t5]|4 & 3 &
					6 & |[fill=t3]|0 \\
					|[fill=t1]|7 & |[fill=t2]|9 &
					8 &
					4 & |[fill=t3]|1 &
					3 &
					2 & |[fill=t4]|0 &
					5 & |[fill=t5]|6 \\
					|[fill=t4]|4 & 5 &
					|[fill=t5]|1 &
					|[fill=t2]|2 & |[fill=t1]|6 &
					0 &
					9 & 8 &
					|[fill=t3]|7 & 3 \\
					6 & 4 &
					7 &
					|[fill=t4]|1 & 5 &
					|[fill=t1]|8 &
					|[fill=t3]|3 & |[fill=t5]|9 &
					|[fill=t2]|0 & 2 \\
					5 & |[fill=t5]|3 &
					6 &
					|[fill=t3]|9 & 8 &
					|[fill=t4]|2 &
					|[fill=t1]|0 & 4 &
					1 & |[fill=t2]|7 \\
					|[fill=t3]|8 & 0 &
					|[fill=t2]|3 &
					6 & 2 &
					|[fill=t5]|7 &
					5 & |[fill=t1]|1 &
					|[fill=t4]|9 & 4 \\
					9 & 8 &
					0 &
					|[fill=t5]|5 & |[fill=t4]|3 &
					|[fill=t2]|6 &
					7 & |[fill=t3]|2 &
					|[fill=t1]|4 & 1 \\
					|[fill=t5]|2 & |[fill=t4]|7 &
					|[fill=t3]|4 &
					0 & 9 &
					1 &
					|[fill=t2]|8 & 6 &
					3 & |[fill=t1]|5 \\
					};
				\draw[ultra thick, red] (A-1-1.north
				west)
				rectangle (A-10-4.south east);
				\draw[ultra thick, red] (A-1-1.north
				west)
				rectangle (A-4-10.south east);
				\draw[ultra thick] (A-1-1.north west)
				rectangle
				(A-10-10.south east);
			\end{scope}
			\begin{scope}[xshift=8cm]
				\matrix (A) [square freq small] {
					0 & 1 &
					2 &
					|[fill=t1]|3 & |[fill=t2]|4 &
					|[fill=t3]|5 &
					|[fill=t4]|6 & 7 &
					|[fill=t5]|8 & 9 \\
					1 & |[fill=t3]|8 &
					|[fill=t1]|6 &
					4 & |[fill=t5]|5 &
					2 &
					7 & |[fill=t2]|3 &
					9 & |[fill=t4]|0 \\
					|[fill=t2]|6 & |[fill=t1]|4 &
					|[fill=t4]|7 &
					5 & 8 &
					0 &
					|[fill=t5]|3 & 9 &
					1 & |[fill=t3]|2 \\
					|[fill=t1]|2 & |[fill=t2]|5 &
					3 &
					9 & |[fill=t3]|0 &
					7 &
					8 & |[fill=t4]|1 &
					6 & |[fill=t5]|4 \\
					|[fill=t4]|5 & 2 &
					|[fill=t5]|9 &
					|[fill=t2]|0 & |[fill=t1]|7 &
					8 &
					1 & 4 &
					|[fill=t3]|3 & 6 \\
					3 & 6 &
					0 &
					|[fill=t4]|8 & 9 &
					|[fill=t1]|1 &
					|[fill=t3]|4 & |[fill=t5]|2 &
					|[fill=t2]|7 & 5 \\
					4 & |[fill=t5]|0 &
					5 &
					|[fill=t3]|7 & 6 &
					|[fill=t4]|3 &
					|[fill=t1]|9 & 8 &
					2 & |[fill=t2]|1 \\
					|[fill=t3]|9 & 3 &
					|[fill=t2]|8 &
					2 & 1 &
					|[fill=t5]|6 &
					0 & |[fill=t1]|5 &
					|[fill=t4]|4 & 7 \\
					8 & 7 &
					4 &
					|[fill=t5]|1 & |[fill=t4]|2 &
					|[fill=t2]|9 &
					5 & |[fill=t3]|6 &
					|[fill=t1]|0 & 3 \\
					|[fill=t5]|7 & |[fill=t4]|9 &
					|[fill=t3]|1 &
					6 & 3 &
					4 &
					|[fill=t2]|2 & 0 &
					5 & |[fill=t1]|8 \\
					};
				\draw[ultra thick, red] (A-1-1.north
				west)
				rectangle (A-10-4.south east);
				\draw[ultra thick, red] (A-1-1.north
				west)
				rectangle (A-4-10.south east);
				\draw[ultra thick] (A-1-1.north west)
				rectangle
				(A-10-10.south east);
			\end{scope}
		\end{tikzpicture}
	\caption{\label{fig:4444with5trans1}A \net{4}{10} satisfying a
          relation of type $4^4$ with 7 common
          transversals and a set of 5 disjoint common transversals
          (coloured).}
	\end{center}
\end{figure}

\begin{figure}
	\begin{center}
		\begin{tikzpicture}
			\begin{scope}
				\matrix (A) [square freq small] {
				|[fill=t1]|0 & |[fill=t2]|1 &
				|[fill=t3]|2 &
				|[fill=t0]|3 & |[fill=t0]|4 &
				|[fill=t0]|5 &
				|[fill=t0]|6 & |[fill=t4]|7 &
				|[fill=t5]|8 & |[fill=t0]|9 \\
				|[fill=t3]|8 & |[fill=t5]|5 &
				|[fill=t0]|9 &
				|[fill=t0]|7 & |[fill=t4]|6 &
				|[fill=t2]|4 &
				|[fill=t0]|3 & |[fill=t0]|0 &
				|[fill=t1]|2 & |[fill=t0]|1 \\
				|[fill=t0]|7 & |[fill=t0]|8 &
				|[fill=t1]|4 &
				|[fill=t4]|1 & |[fill=t2]|9 &
				|[fill=t5]|2 &
				|[fill=t0]|5 & |[fill=t0]|3 &
				|[fill=t0]|0 & |[fill=t3]|6 \\
				|[fill=t4]|9 & |[fill=t0]|2 &
				|[fill=t0]|3 &
				|[fill=t5]|6 & |[fill=t3]|1 &
				|[fill=t1]|8 &
				|[fill=t2]|7 & |[fill=t0]|4 &
				|[fill=t0]|5 & |[fill=t0]|0 \\
				|[fill=t5]|4 & |[fill=t1]|6 &
				|[fill=t0]|7 &
				|[fill=t0]|9 & |[fill=t0]|5 &
				|[fill=t0]|1 &
				|[fill=t3]|0 & |[fill=t0]|2 &
				|[fill=t2]|3 & |[fill=t4]|8 \\
				|[fill=t0]|6 & |[fill=t0]|4 &
				|[fill=t0]|1 &
				|[fill=t0]|8 & |[fill=t5]|3 &
				|[fill=t4]|0 &
				|[fill=t1]|9 & |[fill=t3]|5 &
				|[fill=t0]|7 & |[fill=t2]|2 \\
				|[fill=t2]|5 & |[fill=t3]|7 &
				|[fill=t0]|6 &
				|[fill=t0]|2 & |[fill=t0]|0 &
				|[fill=t0]|9 &
				|[fill=t5]|1 & |[fill=t0]|8 &
				|[fill=t4]|4 & |[fill=t1]|3 \\
				|[fill=t0]|3 & |[fill=t0]|0 &
				|[fill=t2]|8 &
				|[fill=t3]|4 & |[fill=t1]|7 &
				|[fill=t0]|6 &
				|[fill=t4]|2 & |[fill=t5]|9 &
				|[fill=t0]|1 & |[fill=t0]|5 \\
				|[fill=t0]|2 & |[fill=t0]|9 &
				|[fill=t4]|5 &
				|[fill=t2]|0 & |[fill=t0]|8 &
				|[fill=t3]|3 &
				|[fill=t0]|4 & |[fill=t1]|1 &
				|[fill=t0]|6 & |[fill=t5]|7 \\
				|[fill=t0]|1 & |[fill=t4]|3 &
				|[fill=t5]|0 &
				|[fill=t1]|5 & |[fill=t0]|2 &
				|[fill=t0]|7 &
				|[fill=t0]|8 & |[fill=t2]|6 &
				|[fill=t3]|9 & |[fill=t0]|4 \\
				};
				\draw[ultra thick, red] (A-1-1.north
				west)
				rectangle (A-10-4.south east);
				\draw[ultra thick, red] (A-1-1.north
				west)
				rectangle (A-4-10.south east);
				\draw[ultra thick] (A-1-1.north west)
				rectangle
				(A-10-10.south east);
			\end{scope}
			\begin{scope}[xshift=8cm]
				\matrix (A) [square freq small] {
				|[fill=t1]|0 & |[fill=t2]|1 &
				|[fill=t3]|2 &
				|[fill=t0]|3 & |[fill=t0]|4 &
				|[fill=t0]|5 &
				|[fill=t0]|6 & |[fill=t4]|7 &
				|[fill=t5]|8 & |[fill=t0]|9 \\
				|[fill=t3]|5 & |[fill=t5]|7 &
				|[fill=t0]|6 &
				|[fill=t0]|4 & |[fill=t4]|2 &
				|[fill=t2]|3 &
				|[fill=t0]|0 & |[fill=t0]|1 &
				|[fill=t1]|9 & |[fill=t0]|8 \\
				|[fill=t0]|2 & |[fill=t0]|3 &
				|[fill=t1]|8 &
				|[fill=t4]|5 & |[fill=t2]|7 &
				|[fill=t5]|0 &
				|[fill=t0]|4 & |[fill=t0]|9 &
				|[fill=t0]|6 & |[fill=t3]|1 \\
				|[fill=t4]|4 & |[fill=t0]|6 &
				|[fill=t0]|1 &
				|[fill=t5]|9 & |[fill=t3]|0 &
				|[fill=t1]|7 &
				|[fill=t2]|8 & |[fill=t0]|5 &
				|[fill=t0]|2 & |[fill=t0]|3 \\
				|[fill=t5]|1 & |[fill=t1]|4 &
				|[fill=t0]|0 &
				|[fill=t0]|8 & |[fill=t0]|9 &
				|[fill=t0]|2 &
				|[fill=t3]|7 & |[fill=t0]|3 &
				|[fill=t2]|5 & |[fill=t4]|6 \\
				|[fill=t0]|3 & |[fill=t0]|2 &
				|[fill=t0]|7 &
				|[fill=t0]|0 & |[fill=t5]|6 &
				|[fill=t4]|9 &
				|[fill=t1]|5 & |[fill=t3]|8 &
				|[fill=t0]|1 & |[fill=t2]|4 \\
				|[fill=t2]|6 & |[fill=t3]|9 &
				|[fill=t0]|5 &
				|[fill=t0]|7 & |[fill=t0]|8 &
				|[fill=t0]|1 &
				|[fill=t5]|3 & |[fill=t0]|4 &
				|[fill=t4]|0 & |[fill=t1]|2 \\
				|[fill=t0]|7 & |[fill=t0]|5 &
				|[fill=t2]|9 &
				|[fill=t3]|6 & |[fill=t1]|3 &
				|[fill=t0]|8 &
				|[fill=t4]|1 & |[fill=t5]|2 &
				|[fill=t0]|4 & |[fill=t0]|0 \\
				|[fill=t0]|8 & |[fill=t0]|0 &
				|[fill=t4]|3 &
				|[fill=t2]|2 & |[fill=t0]|1 &
				|[fill=t3]|4 &
				|[fill=t0]|9 & |[fill=t1]|6 &
				|[fill=t0]|7 & |[fill=t5]|5 \\
				|[fill=t0]|9 & |[fill=t4]|8 &
				|[fill=t5]|4 &
				|[fill=t1]|1 & |[fill=t0]|5 &
				|[fill=t0]|6 &
				|[fill=t0]|2 & |[fill=t2]|0 &
				|[fill=t3]|3 & |[fill=t0]|7 \\
				};
				\draw[ultra thick, red] (A-1-1.north
				west)
				rectangle (A-10-4.south east);
				\draw[ultra thick, red] (A-1-1.north
				west)
				rectangle (A-4-10.south east);
				\draw[ultra thick] (A-1-1.north west)
				rectangle
				(A-10-10.south east);
			\end{scope}
		\end{tikzpicture}
	\caption{\label{fig:4444with5trans2}A \net{4}{10} satisfying a
          relation of type $4^4$ with 19 common
          transversals and a set of 5 disjoint common transversals
          (coloured).}
	\end{center}
\end{figure}

\section{Nets with Odd Relations of Type $4^22^3$}\label{s:44222}

While the set of templates of type $4^22^3$ is small, each template contains
three frequency squares with approximately $74\,000$ candidate lines per
frequency square. As the complexity of \Cref{alg:backtracking} is dependent on
the number of candidate lines, the time required to find all nets that are
extensions of the trivial partial net using the candidate lines is
prohibitively large. Since it is conjectured that no \net{5}{10} exists, the
computation of nets containing an odd relation of type $4^22^3$ is likely to
return no output. For these two reasons we instead enumerated a certain set
$\Omega'$ of \mols{2}{10}.  This set is such that if there is a \net{5}{10}
satisfying an odd relation of type $4^22^3$ then it must contain a subnet
isomorphic to a pair of MOLS in $\Omega'$. Given such a set, if no pair of MOLS
in $\Omega'$ is extendible to a triple, then no \net{5}{10} satisfies an odd
relation of type $4^22^3$. 

\begin{figure}
  \begin{center}
    \begin{tikzpicture}
      \begin{scope}
	\matrix (A) [square freq small] {
|[fill=t1]| 0 & |[fill=t0]| 1 & |[fill=t0]| 2 & |[fill=t2]| 3 & |[fill=t3]| 4 & |[fill=t4]| 5 & |[fill=t0]| 6 & |[fill=t0]| 7 & |[fill=t0]| 8 & |[fill=t0]| 9 \\
|[fill=t0]| 1 & |[fill=t2]| 0 & |[fill=t1]| 9 & |[fill=t0]| 7 & |[fill=t0]| 6 & |[fill=t0]| 3 & |[fill=t0]| 4 & |[fill=t4]| 8 & |[fill=t0]| 2 & |[fill=t3]| 5 \\
|[fill=t3]| 6 & |[fill=t1]| 8 & |[fill=t2]| 1 & |[fill=t0]| 0 & |[fill=t0]| 5 & |[fill=t0]| 9 & |[fill=t4]| 7 & |[fill=t0]| 2 & |[fill=t0]| 4 & |[fill=t0]| 3 \\
|[fill=t2]| 5 & |[fill=t0]| 4 & |[fill=t4]| 0 & |[fill=t1]| 1 & |[fill=t0]| 2 & |[fill=t3]| 8 & |[fill=t0]| 9 & |[fill=t0]| 3 & |[fill=t0]| 6 & |[fill=t0]| 7 \\
|[fill=t0]| 3 & |[fill=t0]| 5 & |[fill=t0]| 4 & |[fill=t4]| 6 & |[fill=t0]| 0 & |[fill=t0]| 1 & |[fill=t0]| 8 & |[fill=t3]| 9 & |[fill=t2]| 7 & |[fill=t1]| 2 \\
|[fill=t0]| 4 & |[fill=t0]| 3 & |[fill=t0]| 7 & |[fill=t3]| 2 & |[fill=t4]| 1 & |[fill=t0]| 0 & |[fill=t0]| 5 & |[fill=t1]| 6 & |[fill=t0]| 9 & |[fill=t2]| 8 \\
|[fill=t4]| 2 & |[fill=t0]| 6 & |[fill=t3]| 3 & |[fill=t0]| 9 & |[fill=t0]| 8 & |[fill=t0]| 7 & |[fill=t0]| 1 & |[fill=t2]| 4 & |[fill=t1]| 5 & |[fill=t0]| 0 \\
|[fill=t0]| 7 & |[fill=t4]| 9 & |[fill=t0]| 5 & |[fill=t0]| 8 & |[fill=t0]| 3 & |[fill=t1]| 4 & |[fill=t2]| 2 & |[fill=t0]| 0 & |[fill=t3]| 1 & |[fill=t0]| 6 \\
|[fill=t0]| 8 & |[fill=t3]| 7 & |[fill=t0]| 6 & |[fill=t0]| 5 & |[fill=t2]| 9 & |[fill=t0]| 2 & |[fill=t1]| 3 & |[fill=t0]| 1 & |[fill=t0]| 0 & |[fill=t4]| 4 \\
|[fill=t0]| 9 & |[fill=t0]| 2 & |[fill=t0]| 8 & |[fill=t0]| 4 & |[fill=t1]| 7 & |[fill=t2]| 6 & |[fill=t3]| 0 & |[fill=t0]| 5 & |[fill=t4]| 3 & |[fill=t0]| 1 \\
};
	\draw[ultra thick, red] (A-1-1.north
	west)
	rectangle (A-10-4.south east);
	\draw[ultra thick, red] (A-1-1.north
	west)
	rectangle (A-4-10.south east);
	\draw[ultra thick] (A-1-1.north west)
	rectangle
	(A-10-10.south east);
      \end{scope}
      \begin{scope}[xshift=8cm]
	\matrix (A) [square freq small] {
|[fill=t1]| 0 & |[fill=t0]| 1 & |[fill=t0]| 2 & |[fill=t2]| 3 & |[fill=t3]| 4 & |[fill=t4]| 5 & |[fill=t0]| 6 & |[fill=t0]| 7 & |[fill=t0]| 8 & |[fill=t0]| 9 \\
|[fill=t0]| 7 & |[fill=t2]| 2 & |[fill=t1]| 5 & |[fill=t0]| 0 & |[fill=t0]| 8 & |[fill=t0]| 6 & |[fill=t0]| 9 & |[fill=t4]| 4 & |[fill=t0]| 1 & |[fill=t3]| 3 \\
|[fill=t3]| 2 & |[fill=t1]| 9 & |[fill=t2]| 0 & |[fill=t0]| 4 & |[fill=t0]| 7 & |[fill=t0]| 8 & |[fill=t4]| 1 & |[fill=t0]| 3 & |[fill=t0]| 6 & |[fill=t0]| 5 \\
|[fill=t2]| 6 & |[fill=t0]| 0 & |[fill=t4]| 8 & |[fill=t1]| 2 & |[fill=t0]| 5 & |[fill=t3]| 1 & |[fill=t0]| 7 & |[fill=t0]| 9 & |[fill=t0]| 3 & |[fill=t0]| 4 \\
|[fill=t0]| 1 & |[fill=t0]| 8 & |[fill=t0]| 3 & |[fill=t4]| 7 & |[fill=t0]| 9 & |[fill=t0]| 4 & |[fill=t0]| 2 & |[fill=t3]| 0 & |[fill=t2]| 5 & |[fill=t1]| 6 \\
|[fill=t0]| 5 & |[fill=t0]| 4 & |[fill=t0]| 9 & |[fill=t3]| 8 & |[fill=t4]| 6 & |[fill=t0]| 3 & |[fill=t0]| 0 & |[fill=t1]| 1 & |[fill=t0]| 2 & |[fill=t2]| 7 \\
|[fill=t4]| 9 & |[fill=t0]| 5 & |[fill=t3]| 7 & |[fill=t0]| 6 & |[fill=t0]| 0 & |[fill=t0]| 2 & |[fill=t0]| 3 & |[fill=t2]| 8 & |[fill=t1]| 4 & |[fill=t0]| 1 \\
|[fill=t0]| 8 & |[fill=t4]| 3 & |[fill=t0]| 1 & |[fill=t0]| 5 & |[fill=t0]| 2 & |[fill=t1]| 7 & |[fill=t2]| 4 & |[fill=t0]| 6 & |[fill=t3]| 9 & |[fill=t0]| 0 \\
|[fill=t0]| 3 & |[fill=t3]| 6 & |[fill=t0]| 4 & |[fill=t0]| 9 & |[fill=t2]| 1 & |[fill=t0]| 0 & |[fill=t1]| 8 & |[fill=t0]| 5 & |[fill=t0]| 7 & |[fill=t4]| 2 \\
|[fill=t0]| 4 & |[fill=t0]| 7 & |[fill=t0]| 6 & |[fill=t0]| 1 & |[fill=t1]| 3 & |[fill=t2]| 9 & |[fill=t3]| 5 & |[fill=t0]| 2 & |[fill=t4]| 0 & |[fill=t0]| 8 \\
	};
	\draw[ultra thick, red] (A-1-1.north
	west)
	rectangle (A-10-4.south east);
	\draw[ultra thick, red] (A-1-1.north
	west)
	rectangle (A-4-10.south east);
	\draw[ultra thick] (A-1-1.north west)
	rectangle
	(A-10-10.south east);
      \end{scope}
    \end{tikzpicture}
    \caption{\label{fig:44222with4trans1}A \net{4}{10} contained
      in $\Omega'$ with 4 common transversals that are all disjoint (coloured) obtained from
      \Cref{fig:44222template}. The $0$ and $1$ symbol lines are
      relational in the first square and the $0$ and $2$ symbol
      lines are relational in the second.}
  \end{center}
\end{figure}

\begin{figure}
	\begin{center}
		\begin{tikzpicture}
			
			\begin{scope}
				\matrix (A) [square freq small] {
|[fill=t1]|0 & |[fill=t0]|1 & |[fill=t0]|2 & |[fill=t0]|3 & |[fill=t0]|4 & |[fill=t2]|5 & |[fill=t3]|6 & |[fill=t0]|7 & |[fill=t0]|8 & |[fill=t0]|9 \\
|[fill=t0]|6 & |[fill=t1]|2 & |[fill=t0]|8 & |[fill=t2]|0 & |[fill=t0]|7 & |[fill=t3]|3 & |[fill=t0]|9 & |[fill=t0]|5 & |[fill=t0]|1 & |[fill=t0]|4 \\
|[fill=t0]|2 & |[fill=t0]|4 & |[fill=t0]|0 & |[fill=t1]|9 & |[fill=t0]|6 & |[fill=t0]|8 & |[fill=t0]|5 & |[fill=t2]|1 & |[fill=t0]|3 & |[fill=t3]|7 \\
|[fill=t2]|7 & |[fill=t0]|0 & |[fill=t1]|5 & |[fill=t3]|2 & |[fill=t0]|9 & |[fill=t0]|1 & |[fill=t0]|3 & |[fill=t0]|4 & |[fill=t0]|6 & |[fill=t0]|8 \\
|[fill=t3]|5 & |[fill=t2]|6 & |[fill=t0]|9 & |[fill=t0]|8 & |[fill=t1]|3 & |[fill=t0]|7 & |[fill=t0]|0 & |[fill=t0]|2 & |[fill=t0]|4 & |[fill=t0]|1 \\
|[fill=t0]|1 & |[fill=t0]|9 & |[fill=t0]|7 & |[fill=t0]|6 & |[fill=t2]|8 & |[fill=t1]|4 & |[fill=t0]|2 & |[fill=t0]|3 & |[fill=t3]|0 & |[fill=t0]|5 \\
|[fill=t0]|8 & |[fill=t0]|7 & |[fill=t3]|4 & |[fill=t0]|5 & |[fill=t0]|0 & |[fill=t0]|2 & |[fill=t1]|1 & |[fill=t0]|6 & |[fill=t2]|9 & |[fill=t0]|3 \\
|[fill=t0]|4 & |[fill=t0]|5 & |[fill=t2]|3 & |[fill=t0]|1 & |[fill=t0]|2 & |[fill=t0]|6 & |[fill=t0]|8 & |[fill=t3]|9 & |[fill=t1]|7 & |[fill=t0]|0 \\
|[fill=t0]|9 & |[fill=t0]|3 & |[fill=t0]|6 & |[fill=t0]|4 & |[fill=t3]|1 & |[fill=t0]|0 & |[fill=t0]|7 & |[fill=t1]|8 & |[fill=t0]|5 & |[fill=t2]|2 \\
|[fill=t0]|3 & |[fill=t3]|8 & |[fill=t0]|1 & |[fill=t0]|7 & |[fill=t0]|5 & |[fill=t0]|9 & |[fill=t2]|4 & |[fill=t0]|0 & |[fill=t0]|2 & |[fill=t1]|6 \\
};
				\draw[ultra thick, red] (A-1-1.north
				west)
				rectangle (A-10-4.south east);
				\draw[ultra thick, red] (A-1-1.north
				west)
				rectangle (A-4-10.south east);
				\draw[ultra thick] (A-1-1.north west)
				rectangle
				(A-10-10.south east);
			\end{scope}

			\begin{scope}[xshift=8cm]
				\matrix (A) [square freq small] {
|[fill=t1]|0 & |[fill=t0]|1 & |[fill=t0]|2 & |[fill=t0]|3 & |[fill=t0]|4 & |[fill=t2]|5 & |[fill=t3]|6 & |[fill=t0]|7 & |[fill=t0]|8 & |[fill=t0]|9 \\
|[fill=t0]|5 & |[fill=t1]|3 & |[fill=t0]|0 & |[fill=t2]|9 & |[fill=t0]|1 & |[fill=t3]|7 & |[fill=t0]|8 & |[fill=t0]|2 & |[fill=t0]|4 & |[fill=t0]|6 \\
|[fill=t0]|6 & |[fill=t0]|0 & |[fill=t0]|3 & |[fill=t1]|7 & |[fill=t0]|9 & |[fill=t0]|4 & |[fill=t0]|1 & |[fill=t2]|8 & |[fill=t0]|5 & |[fill=t3]|2 \\
|[fill=t2]|3 & |[fill=t0]|8 & |[fill=t1]|4 & |[fill=t3]|0 & |[fill=t0]|5 & |[fill=t0]|6 & |[fill=t0]|9 & |[fill=t0]|1 & |[fill=t0]|2 & |[fill=t0]|7 \\
|[fill=t3]|8 & |[fill=t2]|7 & |[fill=t0]|6 & |[fill=t0]|1 & |[fill=t1]|2 & |[fill=t0]|9 & |[fill=t0]|4 & |[fill=t0]|5 & |[fill=t0]|3 & |[fill=t0]|0 \\
|[fill=t0]|9 & |[fill=t0]|2 & |[fill=t0]|5 & |[fill=t0]|4 & |[fill=t2]|6 & |[fill=t1]|8 & |[fill=t0]|7 & |[fill=t0]|0 & |[fill=t3]|1 & |[fill=t0]|3 \\
|[fill=t0]|2 & |[fill=t0]|4 & |[fill=t3]|9 & |[fill=t0]|6 & |[fill=t0]|7 & |[fill=t0]|1 & |[fill=t1]|5 & |[fill=t0]|3 & |[fill=t2]|0 & |[fill=t0]|8 \\
|[fill=t0]|7 & |[fill=t0]|9 & |[fill=t2]|1 & |[fill=t0]|2 & |[fill=t0]|8 & |[fill=t0]|0 & |[fill=t0]|3 & |[fill=t3]|4 & |[fill=t1]|6 & |[fill=t0]|5 \\
|[fill=t0]|1 & |[fill=t0]|6 & |[fill=t0]|8 & |[fill=t0]|5 & |[fill=t3]|3 & |[fill=t0]|2 & |[fill=t0]|0 & |[fill=t1]|9 & |[fill=t0]|7 & |[fill=t2]|4 \\
|[fill=t0]|4 & |[fill=t3]|5 & |[fill=t0]|7 & |[fill=t0]|8 & |[fill=t0]|0 & |[fill=t0]|3 & |[fill=t2]|2 & |[fill=t0]|6 & |[fill=t0]|9 & |[fill=t1]|1 \\
};
				\draw[ultra thick, red] (A-1-1.north
				west)
				rectangle (A-10-4.south east);
				\draw[ultra thick, red] (A-1-1.north
				west)
				rectangle (A-4-10.south east);
				\draw[ultra thick] (A-1-1.north west)
				rectangle
				(A-10-10.south east);
			\end{scope}
		\end{tikzpicture}
	\caption{\label{fig:44222with4trans2}A \net{4}{10} that is a refinement of
          the the second and third squares of the template in
		  \Cref{fig:44222template}. The pair is not contained in $\Omega'$ and
          contains $13$ common transversals and a set of $3$ disjoint
          common transversals (coloured).  The $0$ and $2$ symbol
          lines are relational in the first square and the $0$ and $3$
          symbol lines are relational in the second.}
	\end{center}
\end{figure}

As $\Omega'$ will contain pairs of MOLS, we need to select $2$ frequency
squares and search for all possible refinements of these frequency squares. The
choice of the frequency squares affects not just the efficiency of computing
$\Omega'$, it affects the content of the set itself. For example, consider
\Cref{fig:44222with4trans1,fig:44222with4trans2}, both of which show pairs of
MOLS that are refinements of the frequency squares from the template in
\Cref{fig:44222template}. \Cref{fig:44222with4trans1} is a refinement of the
first and second frequency squares, whereas \Cref{fig:44222with4trans2} is a
refinement of the second and third frequency squares. As a consequence, it will
turn out that the MOLS in \Cref{fig:44222with4trans1} were in our set
$\Omega'$, but the MOLS in \Cref{fig:44222with4trans2} were not paratopic to
any pair of MOLS in $\Omega'$.

To construct the set $\Omega'$ we first selected an ordering of the three
frequency squares in each template.  Then we determined the set of candidate
lines with respect to the template.  Next, we used \Cref{alg:symmetrybroken} to
extend the trivial partial net to obtain pairs of MOLS that refine the first
and second frequency squares.  The construction of the orbits for the candidate
lines, as described in \Cref{section:sym}, is heavily dependent on the parallel
class $\Pi_2$. This is one reason why the ordering of the squares in the
template affects the efficiency of the computation.

All search strategies supplied to \Cref{alg:symmetrybroken} first extended the
trivial partial net by selecting the two relational candidate lines in $\Pi_4$;
this is fast and greatly restricts the lines that are available for $\Pi_2$ and
$\Pi_3$. The remaining details of the search strategy depended on the
automorphism group of the template concerned, and will be discussed in detail
below. The result of this computation is a set of partial $5$-nets of order
$10$. The partial parallel class $\Pi_4$ was then discarded to give a set of
pairs of MOLS.  This set was screened to obtain a set $\Omega'(T)$ of species
representatives of the MOLS that we obtained from the template $T$.  Finally,
the set $\Omega'$ was obtained by taking the union over all sets $\Omega'(T)$
for the 30 choices of template $T$ given in \Tref{table:templatecompletions}.
It is clear from this process that if there exists a \net{5}{10} $N$ satisfying
an odd relation of type $4^22^3$ then there is a \net{4}{10} in $\Omega'$ that
is isomorphic to a subnet of $N$.

\subsection{Symmetries in Templates}\label{s:44222sym}

We say that a template of type $4^22^3$ has \emph{$S_3$ symmetry} if the
automorphism group of the template can induce any reordering of the three
frequency squares in the template. A template admits a \emph{$C_2$ symmetry} if
there is an automorphism that exchanges two of the frequency squares but the
template does not have $S_3$ symmetry. Of the $30$ templates of type $4^22^3$,
there are $9$ possessing $S_3$ symmetry, $17$ with a $C_2$ symmetry and the
remaining $4$ templates admit no automorphism that reorders the frequency
squares.

\Cref{table:templatecompletions} lists the templates that we used to compute
$\Omega'$. For templates other than those with $C_2$ symmetry we had no reason
to choose any ordering of the frequency squares over any other ordering, so we
simply used them in the order that we had generated them. The situation for
$C_2$ symmetry is more interesting.  After some experimenting we discovered
that it is best to use an ordering for which the $C_2$ symmetry exchanges the
second and third frequency squares. However, we did not realise this until
after we had computed all nets (up to isomorphism) that are refinements of the
template marked with a $^*$ in \Cref{table:templatecompletions}, so for that
template only we had the $C_2$ symmetry exchanging the first two frequency
squares.

The reason it is better to have the $C_2$ exchanging the last two squares is
related to the size and position of the orbits of candidate lines.  Each of
these orbits typically includes twice as many lines in the square that is fixed
by the $C_2$ compared to each of the other two squares (individually). We found
it was quicker to have larger orbits in the first square compared to orbits of
the same size split between the first two squares. The worst option of all is
to position the $C_2$ so that it exchanges the first and third square, since
then half of each orbit is wasted because of the fact that we only ever add two
lines to $\Pi_4$.

We used two different search strategies. Both found extensions of the trivial
partial net by selecting two relational candidate lines in $\Pi_4$ first, but
after that they diverged. One strategy proceeded by alternating between the
first two frequency squares. The other found a refinement of the first square
to a parallel class before turning to the second frequency square. Our limited
experimentation suggests that the alternating strategy is faster whenever the
two squares have similar numbers of candidate lines remaining. This is always
true at the start of the search, and it remains true throughout the search when
the template has $S_3$ symmetry. For templates without $S_3$ symmetry (assuming
we implement our preference above in the case of $C_2$ symmetry), the number of
candidate lines becomes unbalanced as orbits are discarded from the first
square. At some point it becomes faster to switch to the non-alternating search
strategy. The exact point at which this happens is unclear and was only roughly
approximated. Our two implementations chose to switch strategies at different
points.

\subsection{Computational Results}

Our computation of $\Omega'$ found $100\,826$ species of \mols{2}{10}.  For
each pair of MOLS the common transversals were computed. No pair of MOLS had
more than four disjoint common transversals. As the computation of $\Omega'$
began by adding the two relational lines to $\Pi_4$, each pair of MOLS has at
least two disjoint common transversals. \Cref{table:44222transversals} lists
the number of pairs of MOLS classified by number of common transversals and
disjoint common transversals. Every pair of MOLS in $\Omega'$ has dimension 37,
satisfying no non-trivial relations. For each pair of MOLS in $\Omega'$ we
attempted to extend the pair to a triple in all possible ways (not just those
consistent with the third frequency square).  No extension was found, giving
our second main result:

\begin{theorem}\label{t:no44222}
	Let $N$ be a \net{5}{10}. Then $N$ does not satisfy an odd
        relation of type $2^34^2$.
\end{theorem}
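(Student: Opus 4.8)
The plan is to reduce Theorem~\ref{t:no44222} to the computational enumeration described in Section~\ref{s:44222}, relying on the structural reduction that the set $\Omega'$ captures every relevant subnet. The logical skeleton is a contrapositive: suppose a \net{5}{10} $N$ satisfies an odd relation of type $2^34^2$ (equivalently $4^22^3$ after reordering parallel classes). I would first invoke the template machinery of Section~\ref{s:templates} to assert that, after fixing an ordering of the parallel classes of $N$, the corresponding list of \mols{3}{10} is a refinement of one of the $30$ templates of type $4^22^3$ catalogued in \Cref{table:templatecompletions}. This is guaranteed because \emph{every} net with a non-trivial (odd) relation refines some template, and \Cref{lem:44222template} together with the isomorph-rejection enumeration establishes that these $30$ templates are exhaustive up to equivalence.

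The second step is to extract a pair of MOLS from $N$ and locate it in $\Omega'$. Since $N$ refines one of the $30$ templates $T$, and the construction of $\Omega'(T)$ enumerates all pairs of MOLS refining the \emph{first two} frequency squares of $T$ (after the chosen ordering), the pair of MOLS formed by the first two non-row/column parallel classes of $N$ must be paratopic to some element of $\Omega'(T) \subseteq \Omega'$. The key justification, already stated at the end of Section~\ref{s:44222}, is that any \net{5}{10} satisfying the odd relation contains a \net{4}{10} subnet isomorphic to a member of $\Omega'$. Thus $N$ would be an extension of some pair in $\Omega'$ to a triple of MOLS.

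The final step is the computational verdict. For each of the $100\,826$ species in $\Omega'$, the authors attempted to extend the pair to a triple of MOLS \emph{in all possible ways} (crucially, not restricting to extensions consistent with the third frequency square of the originating template, which is what allows the conclusion to be about arbitrary \net{5}{10}s rather than only those refining $T$). No such extension exists for any pair. This yields the contradiction: $N$ cannot exist, so no \net{5}{10} satisfies an odd relation of type $2^34^2$.

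\textbf{The main obstacle} is ensuring the reduction to $\Omega'$ is genuinely exhaustive, and this is subtle precisely because of the asymmetry flagged in the discussion of \Cref{fig:44222with4trans1,fig:44222with4trans2}: a pair of MOLS refining the \emph{second and third} squares of a template need not lie in $\Omega'$. The correctness therefore hinges on the observation that if $N$ exists, then for \emph{some} ordering of its parallel classes it refines \emph{some} template in the list, and for that template the relevant pair (first two squares) is captured. Because the $30$ templates are closed under the equivalences that reorder frequency squares (the $S_3$ and $C_2$ symmetry analysis of Section~\ref{s:44222sym} tracks exactly this), every ordering of $N$'s symbol parallel classes is accounted for across the catalogue. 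The remaining difficulty is purely one of trusting the correctness and completeness of the backtracking search in \Cref{alg:backtracking,alg:symmetrybroken}; this is mitigated by the independent cross-checking performed by the two authors, and the plan is simply to cite the non-existence of any triple extension as the decisive computational fact.
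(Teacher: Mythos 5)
Your proposal is correct and follows essentially the same route as the paper: reduce to the catalogue of $30$ templates of type $4^22^3$, observe that any \net{5}{10} with such an odd relation must contain a sub-\net{4}{10} isomorphic to a pair in $\Omega'$ (since the full net supplies the two relational lines of $\Pi_4$ used to seed the search), and then invoke the computational fact that no pair in $\Omega'$ extends to a triple in any way whatsoever. Your handling of the ordering subtlety (that only the pair refining the \emph{first two} frequency squares is guaranteed to appear in $\Omega'$, which suffices because template isomorphism absorbs any reordering) matches the paper's own justification.
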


\begin{table}[h]
	
  \centering
	
  \small{
    \begin{tabular}{ccr}
      \toprule
      \multicolumn{1}{l}{Disjoint}     & \multicolumn{1}{l}{}             & \multicolumn{1}{l}{}       \\
      \multicolumn{1}{l}{Transversals} & \multicolumn{1}{l}{Transversals} & \multicolumn{1}{l}{Species}  \\
      \midrule
      2                                & 2                                & 88\,611                   \\
      2                                & 3                                & 8\,317 \\
      2                                & 4                                & 401\\
      2                                & 5                                & 11\vspace{1mm}\\
      3                                & 3                                & 3\,062\\
      3                                & 4                                & 387\\
      3                                & 5                                & 11\\
      3                                & 6                                & 1\\
      3                                & 8                                & 1\vspace{1mm}\\
      4                                & 4                                & 22\\
      4                                & 5                                & 2                       
    \end{tabular}
    
  }

  \caption{\label{table:44222transversals}The MOLS
          in $\Omega'$ classified by the number of
          transversals and disjoint transversals.}
\end{table}

\section{Concluding remarks}\label{s:conclude}

We have enumerated all \mols{2}{10} that satisfy a non-trivial relation and
found that none of them extend to \mols{3}{10}.  In \Cref{t:no44222}, we have
also eliminated the possibility of \mols{3}{10} satisfying a relation of type
22246. There are nine other plausible types of relations on \mols{4}{10} listed
by \citet{dukes_group_2014}. If all nine could be eliminated then, by
\Cref{t:DH}, it would follow that $N(10)\le3$.  Unfortunately, that is not
possible with our method. Based on evidence including \Cref{table:solcounts} it
seems a general rule that relations have greater flexibility (and hence require
more computation to eliminate) if they have more of their $\lambda_i$ closer to
$n/2$.  In other words, we have eliminated only the easiest of the plausible
relations on \mols{3}{10}, and we believe type $4^5$ to be very much harder to
eliminate than type $22246$.  Furthermore, we expect that eliminating relations
on \mols{4}{10} will be significantly harder again.

Nevertheless, we have made a useful start. As a glimpse of the possibilities
our work might open up, consider the following situation. Suppose that $N_8$ is
an \net{8}{10} and that $N_6$ is the sub $6$-net of largest dimension. Let
$N_7$ be either of the sub $7$-nets of $N_8$ that contain $N_6$.  By
\cite[Prop.~2.5]{dukes_group_2014} we know that $\dim(N_7)\le53$.  However,
$N_7$ is not maximal, so by \cite[Cor.~2.2]{dougherty_coding_1994-1}, we have
$\dim(N_6)<\dim(N_7)$. This means that $N_6$ has at least $t$ independent
non-trivial relations on it, for some $t\ge3$.  By choice of $N_6$, every sub
$6$-net of $N_8$ has the same property.

By \Cref{thm:main_4444} we know that no sub $4$-net of $N_8$ has a non-trivial
relation. Each of the $\binom{8}{6}=28$ sub $6$-nets of $N_8$ has $t$
independent non-trivial relations on it, and each such relation appears on at
most three different sub $6$-nets. Hence, we have identified at least $28t/3$
different non-trivial relations on $N_8$.  We can see no reason why these
relations should be independent. However, if they were, it would show that
\[55-t=\dim(N_6)\le\dim(N_8)\le73-28t/3.\] This would contradict $t\ge3$,
showing that $N_8$ could not exist and demonstrating that $N(10)\le5$.

Finally, we note that the MOLS in $\Omega$ and $\Omega'$ can be downloaded from
\cite{WWWW}, as can the templates used to generate them.

\section*{Acknowledgements}

This research was supported by the Monash eResearch Centre and
eSolutions-Research Support Services through the use of the MonARCH HPC
Cluster. The second author is extremely grateful for the generous hospitality
of Wendy Myrvold and Peter Dukes, who taught him about relations on nets during
his visits to University of Victoria, BC.

 
  \let\oldthebibliography=\thebibliography
  \let\endoldthebibliography=\endthebibliography
  \renewenvironment{thebibliography}[1]{%
    \begin{oldthebibliography}{#1}%
      \setlength{\parskip}{0.2ex}%
      \setlength{\itemsep}{0.2ex}%
  }%
  {%
    \end{oldthebibliography}%
  }

\end{document}